\newtheorem{theorem}{Theorem}[section]
\newtheorem{lemma}[theorem]{Lemma}
\newtheorem{corollary}[theorem]{Corollary}
\newtheorem{definition}{Definition}[section]
\newenvironment{proof}[1][Proof]{\begin{trivlist}
		\item[\hskip \labelsep {\bfseries #1}]}{\end{trivlist}}
\newtheorem{example}{Example}
\newenvironment{remark}[1][Remark]{\begin{trivlist}
		\item[\hskip \labelsep {\bfseries #1}]}{\end{trivlist}}
\journal{Journal}
\begin{document}
	
	\begin{frontmatter}
		
		%% Title, authors and addresses
		
		%% use the tnoteref command within \title for footnotes;
		%% use the tnotetext command for theassociated footnote;
		%% use the fnref command within \author or \address for footnotes;
		%% use the fntext command for theassociated footnote;
		%% use the corref command within \author for corresponding author footnotes;
		%% use the cortext command for theassociated footnote;
		%% use the ead command for the email address,
		%% and the form \ead[url] for the home page:
		%% \title{Title\tnoteref{label1}}
		%% \tnotetext[label1]{}
		%% \author{Name\corref{cor1}\fnref{label2}}
		%% \ead{email address}
		%% \ead[url]{home page}
		%% \fntext[label2]{}
		%% \cortext[cor1]{}
		%% \address{Address\fnref{label3}}
		%% \fntext[label3]{}

		\title{The eigenvalue distribution of special $2$-by-$2$ block  matrix sequences, with applications to the case of symmetrized Toeplitz structures}
		
		\author{Paola Ferrari}% \fnref{label1}}
		\address{Department of Science and high Technology, University of Insubria, Via Valleggio 11, Como, 22100,
			Italy}
		\ead{pferrari@uninsubria.it}
		\author{Isabella Furci}% \fnref{label1}}
		\address{Department of Science and high Technology, University of Insubria, Via Valleggio 11, Como, 22100,
			Italy}
		\ead{ifurci@uninsubria.it}
		\author{Sean Hon\corref{cor1}}% \fnref{label1}}
		\address{Mathematical Institute, University of Oxford, Radcliffe Observatory Quarter, Oxford, OX2 6GG,
			United Kingdom}
		\ead{hon@maths.ox.ac.uk}
		
		 \cortext[cor1]{Corresponding author}
		
		%%%%%
		
		\author{Mohammad Ayman Mursaleen\corref{cor2}}% \fnref{label1}}
		\address{Department of Science and high Technology, University of Insubria, Via Valleggio 11, Como, 22100,
			Italy}
		\ead{mamursaleen@uninsubria.it}
		
		%%%%%%%%%%%%%
		
		\author{Stefano Serra-Capizzano\corref{cor2}}% \fnref{label1}}
		\address{Department of Science and high Technology, University of Insubria, Via Valleggio 11, Como, 22100,
			Italy}
		\ead{stefano.serrac@uninsubria.it}
		%%\cortext[cor1]{Corresponding author}

		\begin{abstract}
			Given a Lebesgue integrable function $f$ over $[0,2\pi]$, we consider the sequence of matrices $\{Y_nT_n[f]\}_n$, where $T_n[f]$ is the $n$-by-$n$ Toeplitz matrix generated by $f$ and $Y_n$ is the flip permutation matrix, also called the anti-identity matrix. Because of the unitary character of $Y_n$, the singular values of $T_n[f]$ and $Y_n T_n[f]$ coincide. However, the eigenvalues are affected substantially by the action of the matrix $Y_n$. Under the assumption that the Fourier coefficients are real, we prove that  $\{Y_nT_n[f]\}_n$ is distributed in the eigenvalue sense as
			\[
			\phi_g(\theta)=\left\{
			\begin{array}{cc}
			g(\theta), & \theta\in [0,2\pi], \\
			-g(-\theta), & \theta\in [-2\pi,0),  
			\end{array}
			\right.\,
			\]
			with $g(\theta)=|f(\theta)|$. We also consider the preconditioning introduced by Pestana and Wathen and, by using the same arguments, we prove that the preconditioned sequence is distributed in the eigenvalue sense as $\phi_1$, under the mild assumption that $f$ is sparsely vanishing. We emphasize that the mathematical tools introduced in this setting have a general character and in fact can be potentially used in different contexts.
			A number of numerical experiments are provided and critically discussed.
		\end{abstract}
		
		\begin{keyword}
			%% keywords here, in the form: keyword \sep keyword
			Toeplitz matrices \sep Hankel matrices \sep circulant preconditioners %\sep PCG \sep PMINRES
			%% PACS codes here, in the form: \PACS code \sep code
			
			%% MSC codes here, in the form: \MSC code \sep code
			%% or \MSC[2008] code \sep code (2000 is the default)
			\MSC[] 15B05 \sep 65F15 \sep 65F08
		\end{keyword}
		
	\end{frontmatter}

\section{Introduction}

Given a Lebesgue integrable function $f$ defined on $[-\pi,\pi]$, i.e. $f\in L^{1}([-\pi,\pi])$, and periodically extended to the whole real line, we consider the Toeplitz matrix $T_n[f]$ of size $n$ generated by $f$. For any $n$, the entries of $T_n[f]$ are defined via the Fourier coefficients $\{a_k(f)\}_k$,
	$a_k=a_k(f)$, $k\in \mathbb Z$, of $f$ in the sense that 
	\[
	\left[T_n[f]\right]_{s,t}=a_{s-t},\quad \quad s,t \in \{1,\ldots,n\}.
	\]
	In the case where the Fourier coefficients are real, namely the corresponding $T_n[f]$ is (real) nonsymmetric, Pestana and Wathen \cite{doi:10.1137/140974213} recently suggested that one can first premultiply $T_{n}[f]$ by the anti-identity matrix $Y_{n}\in \mathbb{R}^{n \times n}$ defined as
	\[
	Y_{n}=\begin{bmatrix}{}
	& & 1 \\
	& \iddots & \\
	1 &  & \end{bmatrix}
	\]
	in order to obtain the symmetrized matrix $Y_{n}T_n[f]$ (i.e. a Hankel matrix). They then introduced an absolute value circulant preconditioner $|C_n|$ and showed, under certain assumptions, that the eigenvalues of $|C_n|^{-1}Y_nT_n[f]$ are clustered around $\pm1$. The same techniques were also proven applicable to functions of Toeplitz matrices in \cite{Hon2018,Hon2018148,2018arXiv180710929H}.
	
	In this work, considering the symmetrized Toeplitz matrix sequences $\{Y_nT_n[f]\}_n$ with $T_n[f]$ generated by $f \in L^1([-\pi,\pi])$, we provide theorems that precisely describe its singular value and spectral distribution, which further extend our previous results in \cite{HMS}. It was shown in \cite{HMS} that roughly half of the eigenvalues of $Y_nT_n[f]$ are negative/positive, when the dimension is sufficiently large and $f$ is sparsely vanishing,  i.e. its set of zeros is of (Lebesgue) measure zero.
	
	We first give a general distributional result, that is Theorem \ref{thm:main_hon-gen}, regarding the eigenvalues of special $2$-by-$2$  block matrix sequences and then furnish the distribution analysis of $\{Y_nT_n[f]\}_n$ in the sense of eigenvalues, under the only assumption that $f$ is Lebesgue integrable with real Fourier coefficients; see Theorem \ref{thm:main_hon-bis} and Corollary \ref{thm:main_hon-bis-corollary}. More in detail, for nonnegative $g$ we define
	\[
	\phi_g(\theta)=\left\{
	\begin{array}{cc}
	g(\theta), & \theta\in [0,2\pi], \\
	-g(-\theta), & \theta\in [-2\pi,0).  
	\end{array}
	\right.\,
	\]
	Our main result is that $\{Y_nT_n[f]\}_n$ is distributed as $\phi_g$ in the sense of eigenvalues with $g(\theta)=|f(\theta)|$. The secondary result resumed in Theorem \ref{thm:main_hon-bis-corollary2} is that the preconditioned matrix sequence $\{|C_n|^{-1}Y_nT_n[f]\}_n$ introduced in \cite{doi:10.1137/140974213} shows the spectral distribution $\phi_1$ independent of $f$ and the latter is equivalent to the second part of Theorem $4.1$ in \cite{HMS}.
	
	The spectral analysis of $\{Y_nT_n[f]\}_n$ is performed by using a general result of $2$-by-$2$  block matrix sequences, whose generality goes beyond the specific case under consideration. The other ingredient of our analysis is the notion of approximation class sequences introduced in the theory of GLT sequences (see the original definition in \cite{CAPIZZANO2001121} and several applications  in \cite{MR3674485}).
	
	Numerical experiments concerning different $T_n[f]$ and the corresponding circulant preconditioners are provided and critically discussed at the end of the paper.

\section{Preliminaries on Toeplitz matrices}\label{section:preliminaries}

As indicated in the introduction, we assume that the considered Toeplitz matrix $T_{n}[f] \in \mathbb{C}^{n \times n}$ is associated with a Lebesgue integrable function $f$ via its Fourier series
	\[
	f(\theta)\sim \sum_{k=-\infty}^{\infty}a_{k}e^{\mathbf{i}k\theta}
	\]
	defined on $[-\pi,\pi]$ and periodically extended on the whole real line. Thus, we have
	\[
	T_{n}[f]=\begin{bmatrix}{}
	a_0 & a_{-1} & \cdots & a_{-n+2} & a_{-n+1} \\
	a_1 & a_0 & a_{-1}   &  & a_{-n+2} \\
	\vdots & a_1 & a_0 & \ddots & \vdots \\
	a_{n-2} &  & \ddots & \ddots & a_{-1} \\
	a_{n-1} & a_{n-2} &\cdots & a_1 & a_0
	\end{bmatrix}, 
	\] where
	\[
	a_{k}=\frac{1}{2\pi} \int_{-\pi} ^{\pi}f(\theta) e^{-\mathbf{i} k \theta } \,d\theta,\quad k=0,\pm1,\pm2,\dots 
	\] 
	are the Fourier coefficients of $f$. The function $f$ is called the \emph{generating function} of $T_n[f]$. If $f$ is complex-valued, then $T_n[f]$ is non-Hermitian for all sufficiently large $n$. Conversely, if $f$ is real-valued, then $T_n[f]$ is Hermitian for all $n$. If $f$ is real-valued and nonnegative, but not identically zero almost everywhere, then $T_n[f]$ is Hermitian positive definite for all $n$. If $f$ is real-valued and even, $T_n[f]$ is (real) symmetric for all $n$ \cite{MR2108963,MR2376196}.

	The singular value and spectral distribution of Toeplitz matrix sequences has been well studied in the past few decades. Ever since Szeg{\H{o}} in \cite{MR890515} showed that the eigenvalues of the Toeplitz matrix $T_n[f]$ generated by a real-valued $f\in L^{\infty}([-\pi,\pi])$ are asymptotically distributed as $f$, such result has been undergone many generalizations and extensions. Under the same assumption on $f$, Avram and Parter \cite{MR952991,MR851935} proved that the singular values of $T_n[f]$ are distributed as $|f|$. Tyrtyshnikov \cite{Tyrtyshnikov19961,MR1258226,MR1481397} and Tilli \cite{MR1671591} later furthered the result for $T_n[f]$ generated by $f\in L^1([-\pi,\pi])$. Recently, Garoni, Serra-Capizzano, and Vassalos \cite{MR3399336} provided the same theorem based on the theory of Generalized Locally Toeplitz (GLT) sequences \cite{MR3674485}. The changes in the singular value and spectral distribution of Toeplitz matrix sequences after certain matrix operations were studied by Tyrtyshnikov and Serra-Capizzano respectively in \cite{TYRTYSHNIKOV1994225,CAPIZZANO2001121,glt-1,glt-2}.
	
		%The generalized Szeg{\H{o}} theorem that describes the singular value and spectral distribution of Toeplitz sequences is given as follows:
	
	\begin{theorem}\cite[Theorem 6.5]{MR3674485}\label{lem:main}
		Suppose $f \in L^{1}([-\pi,\pi])$. Let $T_n[f]$ be the Toeplitz matrix generated by $f$. Then 
		\[
		\{T_n[f]\}_n \sim_{\sigma} f.
		\]
		If moreover $f$ is real-valued, then
		\[
		\{T_n[f]\}_n \sim_{\lambda} f.
		\]
	\end{theorem}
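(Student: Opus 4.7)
The plan is to establish the result through a density argument: reduce to the case of trigonometric polynomials, where the distribution can be read off by classical means, and then transport the conclusion to the full $L^1$ setting via a controlled perturbation estimate that fits the ``approximating class of sequences'' (ACS) framework mentioned in the introduction.

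First I would treat the case in which $f$ is a trigonometric polynomial $p$. Then $T_n[p]$ is banded with bandwidth independent of $n$, and the eigen/singular value distribution follows either from Szeg{\H o}'s classical theorem for bounded symbols, or by comparing $T_n[p]$ with the circulant $C_n[p]$ obtained by periodizing $p$, whose eigenvalues are exactly samples of $p$ on a uniform grid. The difference $T_n[p]-C_n[p]$ is supported on $O(1)$ entries independent of $n$, so Weyl--Lidskii-type inequalities readily yield $\{T_n[p]\}_n\sim_\sigma p$; when $p$ is real and hence $T_n[p]$ is Hermitian, one also obtains $\{T_n[p]\}_n\sim_\lambda p$.

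Next I would lift this to $f\in L^1$. Choose trigonometric polynomials $p_m$ with $\|f-p_m\|_{L^1}\to 0$ (Fej\'er sums, taking the $p_m$ real when $f$ is real). The crucial ingredient is the trace-norm bound
\[
\|T_n[f]-T_n[p_m]\|_1 = \|T_n[f-p_m]\|_1 \le \frac{n}{2\pi}\,\|f-p_m\|_{L^1},
\]
which is obtained from the identity $T_n[h]=\frac{1}{2\pi}\int h(\theta)\,v(\theta)v(\theta)^*\,d\theta$ with $v(\theta)=(e^{-\mathbf{i}j\theta})_{j=1}^n$ and $\|v(\theta)v(\theta)^*\|_1=n$, together with the triangle inequality for the Schatten $1$-norm. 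Consequently $\|T_n[f]-T_n[p_m]\|_1/n\to 0$ uniformly in $n$ as $m\to\infty$, which is exactly the ACS condition identifying $\{T_n[p_m]\}_n$ as an approximating class of sequences for $\{T_n[f]\}_n$.

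Finally, I would invoke the ACS-continuity of the distribution functional. Since $\{T_n[p_m]\}_n\sim_\sigma p_m$ for every $m$ and $|p_m|\to |f|$ in $L^1$ (hence in measure), the ACS property forces $\{T_n[f]\}_n\sim_\sigma f$. When $f$ is real-valued, the same reasoning applied to the Hermitian sequences gives $\{T_n[f]\}_n\sim_\lambda f$. The main obstacle is precisely this last passage to the limit: one must show that small per-dimension trace-norm perturbations preserve singular value and eigenvalue distributions, which ultimately rests on Weyl/Mirsky-type singular value inequalities and a Hoffman--Wielandt type bound in the Hermitian case, applied uniformly in $n$ and against arbitrary bounded continuous test functions. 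This is the technical heart of Theorem~6.5 in \cite{MR3674485}; once the perturbation machinery is in place, the rest of the argument is essentially bookkeeping.
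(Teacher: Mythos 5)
This theorem is not proved in the paper at all: it is quoted verbatim from the reference \cite{MR3674485} (Theorem 6.5 there), so there is no in-paper argument to compare with. Your sketch is, in substance, the standard proof given in that reference: settle the polynomial case by an $O(1)$-rank comparison with a circulant (or by the classical Szeg\H{o} theorem), approximate $f\in L^1$ by (real, when $f$ is real) Fej\'er--Ces\`aro polynomials $p_m$, use the trace-norm estimate $\|T_n[f-p_m]\|_1\le \frac{n}{2\pi}\|f-p_m\|_{L^1}$, and conclude via a.c.s.\ convergence, i.e.\ via Lemma~\ref{lem:Corollary5.1}, with the Hermitian variant handling the eigenvalue statement. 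The only step you state too loosely is the assertion that the per-dimension trace-norm bound ``is exactly the ACS condition'': Definition~\ref{def:ACS} requires a splitting $T_n[f]-T_n[p_m]=R_{n,m}+N_{n,m}$ with $\mathrm{rank}\,R_{n,m}\le c(m)n$ and $\|N_{n,m}\|\le\omega(m)$, and this follows from $\|T_n[f]-T_n[p_m]\|_1\le\epsilon(m)n$ only after the standard thresholding argument (keep the singular values exceeding $\delta(m)=\sqrt{\epsilon(m)}$ in $R_{n,m}$, whose rank is then at most $\epsilon(m)n/\delta(m)=\sqrt{\epsilon(m)}\,n$, and put the rest in $N_{n,m}$ with $\|N_{n,m}\|\le\delta(m)$). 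With that routine lemma made explicit, your argument is complete and coincides with the proof in the cited source; no genuinely different route is being taken.
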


	In the following, we always assume that $f\in L^1([-\pi, \pi])$ and is periodically extended to the real line. Furthermore, we follow all standard notation and terminology introduced in \cite{MR3674485}: let $C_c(\mathbb{C})$ (or $C_c(\mathbb{R})$) be the space of complex-valued continuous functions defined on $\mathbb{C}$ (or $\mathbb{R}$) with bounded support and let $\phi$ be a functional, i.e. any function defined on some vector space which takes values in $\mathbb{C}$. Also, if $g:D\subset \mathbb{R}^k \to \mathbb{K}$ ($\mathbb{R}$ or $\mathbb{C}$) is a measurable function defined on a set $D$ with $0<\mu_k(D)<\infty$, the functional $\phi_g$ is denoted such that
	\[
	\phi_g:C_c(\mathbb{K})\to \mathbb{C} \quad \text{and} \quad \phi_g(F)=\frac{1}{\mu_k(D)}\int_D F(g(\mathbf{x}))\,d\mathbf{x}.
	\]

\begin{definition}\cite[Definition 3.1]{MR3674485}(Singular value and eigenvalue distribution of a matrix sequence)\label{def:spectral_distribution}
Let $\{A_n\}_n$ be a matrix sequence.
		
\begin{enumerate}

\item We say that $\{A_n\}_n$ has an asymptotic singular value distribution described by a functional $\phi:C_c(\mathbb{R})\to \mathbb{C},$ and we write $\{A_n\}_n \sim_{\sigma}\phi,$ if
\[
\lim_{n\to \infty} \frac{1}{n}\sum_{j=1}^{n}F(\sigma_j(A_n))=\phi(F),\quad \forall F \in C_c(\mathbb{R}).
\] If $\phi=\phi_{|f|}$ for some measurable $f:D \subset \mathbb{R}^k \to \mathbb{C}$ defined on a set $D$ with $0<\mu_k(D)<\infty,$ we say that $\{A_n\}_n$ has an asymptotic singular value distribution described by $f$ and we write $\{A_n\}_n \sim_{\sigma} f.$% In this case, the function $f$ is referred to as the singular value symbol of the matrix-sequence $\{A_n\}_n$.

\item We say that $\{A_n\}_n$ has an asymptotic eigenvalue (or spectral) distribution described by a function $\phi:C_c(\mathbb{R})\to \mathbb{C},$ and we write $\{A_n\}_n \sim_{\lambda}\phi,$ if
\[
\lim_{n\to \infty} \frac{1}{n}\sum_{j=1}^{n}F(\lambda_j(A_n))=\phi(F),\quad \forall F \in C_c(\mathbb{C}).
\] If $\phi=\phi_{f}$ for some measurable $f:D \subset \mathbb{R}^k \to \mathbb{C}$ defined on a set $D$ with $0<\mu_k(D)<\infty,$ we say that $\{A_n\}_n$ has an asymptotic eigenvalue (or spectral) distribution described by  $f$ and we write $\{A_n\}_n \sim_{\lambda} f.$% In this case, the function $f$ is referred to as the eigenvalue (or spectral) symbol of the matrix-sequence $\{A_n\}_n$.
			\item Let $\{A_{n}\}_{n}$ be a matrix-sequence. We say that $\{A_{n}\}_{n}$ is
			\textit{sparsely vanishing (s.v.)} if for every $M>0$ there exists $n_{M}$
			such that, for $n\geq n_{M}$,
			\[
			\frac{\#\left\{ i\in \{1,...,n\}:\sigma _{i}(A_{n})<1/M\right\} }{n}
			\leq r(M)
			\]
			where\ $\lim_{M\rightarrow \infty }r(M)=0.$
			
			Note that $\{A_{n}\}_{n}$ is \textit{sparsely vanishing if and only} if
			\[
			\lim_{M\rightarrow \infty }\lim \sup_{n\rightarrow \infty }\frac{\#\left\{
				i\in \{1,...,n\}:\sigma _{i}(A_{n})<1/M\right\} }{n}=0,
			\]
			i.e.
			\[
			\lim_{M\rightarrow \infty }\lim \sup_{n\rightarrow \infty }\frac{1}{n}
			\sum_{i=1}^{n}\chi _{\lbrack 0,1/M)}\left( \sigma _{i}(A_{n})\right) =0.
			\]
			Finally we say that $\{A_{n}\}_{n}$ is \textit{sparsely vanishing (s.v.) in the sense of the eigenvalues} if in the previous two displayed equations the quantity $\sigma _{i}(A_{n})$ is replaced by $|\lambda _{i}(A_{n})|$ for $i=1,\ldots,n$.
		\end{enumerate}
	\end{definition}
	
	The following result holds (see a whole discussion on these issues in \cite[Chapter 9, pp. 165--166]{MR3674485}).
	
	\begin{theorem}\label{lem:sparsely vanishing}
		The followings are true.
		\begin{enumerate}
			\item
			Assume $\{A_n\}_n \sim_{\sigma} f.$  Then $\{A_n\}_n$ is sparsely vanishing if and only if $f$ is sparsely vanishing.
			\item
			Assume $\{A_n\}_n \sim_{\lambda} f.$  Then $\{A_n\}_n$ is sparsely vanishing in the eigenvalues sense if and only if $f$ is sparsely vanishing.
			\item Assume $\{A_n\}_n$ is given and assume that every matrix $A_n$ is normal. Then $\{A_n\}_n$ is sparsely vanishing if and only if  $\{A_n\}_n$ is sparsely vanishing in the eigenvalues sense.
		\end{enumerate}
	\end{theorem}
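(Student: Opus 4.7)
The plan is to handle the three items in turn, using in parts (1) and (2) the standard trick of sandwiching the non-continuous characteristic function $\chi_{[0,1/M)}$ between continuous test functions with compact support, and then applying the definition of asymptotic distribution.

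For part (1), fix $M>0$ and choose $F_M^-,F_M^+ \in C_c(\mathbb{R})$ with
\[
\chi_{[0,1/(2M))} \le F_M^- \le \chi_{[0,1/M)} \le F_M^+ \le \chi_{[0,2/M)}.
\]
Since $\{A_n\}_n \sim_\sigma f$, evaluating the limit relation on $F_M^\pm$ gives
\[
\phi_{|f|}(F_M^-) \le \liminf_{n\to\infty} \frac{1}{n}\#\{i:\sigma_i(A_n)<1/M\} \le \limsup_{n\to\infty} \frac{1}{n}\#\{i:\sigma_i(A_n)<1/M\} \le \phi_{|f|}(F_M^+).
\]
Letting $M\to\infty$, both $F_M^-$ and $F_M^+$ are uniformly bounded by $1$ and converge pointwise to $\chi_{\{0\}}$ on $[0,\infty)$, so by dominated convergence
\[
\phi_{|f|}(F_M^\pm) \;\longrightarrow\; \frac{\mu_k(\{x\in D:f(x)=0\})}{\mu_k(D)}.
\]
The leftmost and rightmost quantities therefore have a common limit, and this limit is zero if and only if $\mu_k(\{f=0\})=0$, i.e., $f$ is sparsely vanishing. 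By the equivalent formulation of s.v.\ given in Definition \ref{def:spectral_distribution}, this limit is also zero if and only if $\{A_n\}_n$ is sparsely vanishing, yielding (1).

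For part (2) the structure is identical; the only change is that the test functions must be elements of $C_c(\mathbb{C})$ applied to possibly complex eigenvalues. I would choose $F_M^\pm(\lambda) := \tilde F_M^\pm(|\lambda|)$ where $\tilde F_M^\pm \in C_c([0,\infty))$ obey the same sandwich as above. Since composing a continuous compactly supported function of $[0,\infty)$ with $|\cdot|$ produces a function in $C_c(\mathbb{C})$ (support contained in a closed disc), one may apply the hypothesis $\{A_n\}_n \sim_\lambda f$ to $F_M^\pm$ and carry out the same dominated-convergence argument with $\sigma_i(A_n)$ replaced by $|\lambda_i(A_n)|$ and $|f(x)|$ kept as the integrand, again obtaining the two-sided equivalence.

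Part (3) is essentially immediate: for a normal matrix $A_n$, the multisets $\{\sigma_i(A_n)\}_{i=1}^n$ and $\{|\lambda_i(A_n)|\}_{i=1}^n$ coincide, hence
\[
\#\{i:\sigma_i(A_n)<1/M\} = \#\{i:|\lambda_i(A_n)|<1/M\}
\]
for every $n$ and $M$; the two sparsely-vanishing conditions in Definition \ref{def:spectral_distribution} are then literally the same. The only genuinely delicate point in the whole proof is the interchange of limits in $M$ and $n$ needed in (1) and (2); I would expect this to be the main (if mild) obstacle, and it is resolved cleanly by the sandwich and dominated convergence, relying on boundedness of $F_M^\pm$ by $1$ and on $\mu_k(D)<\infty$.
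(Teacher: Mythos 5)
Your proof is correct: sandwiching $\chi_{[0,1/M)}$ between continuous compactly supported test functions, passing to the limit in $n$ via the distribution hypothesis, and then letting $M\to\infty$ by dominated convergence gives items (1)--(2), while the coincidence of the multisets $\{\sigma_i(A_n)\}$ and $\{|\lambda_i(A_n)|\}$ for normal $A_n$ settles (3). The paper does not prove this theorem at all — it only refers to the discussion in \cite[Chapter 9, pp.\ 165--166]{MR3674485} — and your argument is essentially the standard one used there, so nothing further is needed.
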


	Moreover, we introduce the following definitions and a key lemma in order to prove our main distribution results in the next chapter.
	
	\begin{definition}\cite[Definition 5.1]{MR3674485}(Approximating class of sequences)\label{def:ACS}
		Let $\{A_n\}_n$ be a matrix sequence and let $\{\{B_{n,m}\}_n\}_m$ be a sequence of matrix sequences. We say that $\{\{B_{n,m}\}_n\}_m$ is an \textit{approximating class of sequences (a.c.s)} for $\{A_n\}_n$ if the following condition is met: for every $m$ there exists $n_m$ such that, for $n \geq n_m$,
		\[
		A_n=B_{n,m}+R_{n,m}+N_{n,m},
		\]
		\[
		\text{rank}~R_{n,m}\leq c(m)n \quad \text{and} \quad \|N_{n,m}\|\leq\omega(m),
		\]
		where $n_m$, $c(m)$, and $\omega(m)$ depend only on $m$ and \[\lim_{m\to\infty}c(m)=\lim_{m\to\infty}\omega(m)=0.\]
	\end{definition}
	
	We use $\{B_{n,m}\}_n\xrightarrow{\text{a.c.s.\ wrt\ $m$}}\{A_n\}_n$ to denote that $\{\{B_{n,m}\}_n\}_m$ is an a.c.s for $\{A_n\}_n$.

	%Let $D \subset \mathbb{R}^k$ be a measurable set with  $0<\mu_k(D)<\infty$, and let
	%\[
	%\mathfrak{M}_D=\{f:D \to \mathbb{C}: f~\text{is measurable}\}.
	%\]
	
	\begin{definition}
		Let $f_m,f:D \subset \mathbb{R}^k \to \mathbb{C}$ be measurable functions. We say that $f_m \to f$ in measure if, for every $\epsilon > 0$,
		\[
		\lim_{m \to \infty} \mu_k\{|f_m-f|>\epsilon\}=0.
		\]
	\end{definition}

	\begin{lemma}\cite[Corollary 5.1]{MR3674485}\label{lem:Corollary5.1}
		Let $\{A_n\}_n, \{B_{n,m}\}_n$ be matrix sequences and let $f,f_m:D \subset \mathbb{R}^k \to \mathbb{C}$ be measurable functions defined on a set $D$ with $0<\mu_k(D)<\infty$. Suppose that
		
		\begin{enumerate}
			\item $\{B_{n,m}\}_n \sim_{\sigma}  f_m$ for every $m$,
			\item $\{B_{n,m}\}_n\xrightarrow{\text{a.c.s.\ wrt\ $m$}}\{A_n\}_n$,
			\item $f_m \to f$ in measure.
		\end{enumerate}
		
		Then 
		\[
		\{A_{n}\}_n \sim_{\sigma}  f.
		\]
		
		Moreover, if the first assumption is replaced by $\{B_{n,m}\}_n \sim_{\lambda}  f_m$ for every $m$, given that the other two assumptions are left unchanged, and all the involved matrices are Hermitian, then  $\{A_{n}\}_n \sim_{\lambda}  f$.
	\end{lemma}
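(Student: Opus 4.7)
My plan is to reduce convergence of the spectral/singular-value averages for $\{A_n\}_n$ to that for $\{B_{n,m}\}_n$ via a three-term triangle inequality. Fix $F\in C_c(\mathbb{R})$ (or $C_c(\mathbb{C})$ in the eigenvalue case) and write
\[
\left|\frac{1}{n}\sum_{j=1}^{n}F(\sigma_j(A_n))-\phi_f(F)\right|\le \mathrm{(I)}_{n,m}+\mathrm{(II)}_{n,m}+\mathrm{(III)}_m,
\]
where $\mathrm{(I)}_{n,m}$ compares the averages for $A_n$ and $B_{n,m}$, $\mathrm{(II)}_{n,m}$ compares the average for $B_{n,m}$ with $\phi_{f_m}(F)$, and $\mathrm{(III)}_m=|\phi_{f_m}(F)-\phi_f(F)|$. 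The strategy is to let $n\to\infty$ first (killing $\mathrm{(II)}_{n,m}$ via hypothesis~1), then $m\to\infty$ (killing $\mathrm{(I)}$ via the a.c.s.\ decomposition and $\mathrm{(III)}$ via hypothesis~3).

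The term $\mathrm{(III)}_m$ is the easy analytic step: $f_m\to f$ in measure and $F$ is bounded and continuous on its support, so $F\circ f_m\to F\circ f$ in measure; since $F$ has compact support the integrands are uniformly bounded, and dominated convergence on the finite measure set $D$ gives $\mathrm{(III)}_m\to 0$. The term $\mathrm{(II)}_{n,m}\to0$ as $n\to\infty$ is exactly what hypothesis~1 asserts for each fixed $m$.

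The main obstacle is $\mathrm{(I)}_{n,m}$, which is where the a.c.s.\ structure must interact with the perturbation theory of singular values (respectively eigenvalues). Writing $A_n=B_{n,m}+R_{n,m}+N_{n,m}$ with $\mathrm{rank}\,R_{n,m}\le c(m)n$ and $\|N_{n,m}\|\le\omega(m)$, I would invoke Mirsky's theorem to get $|\sigma_j(A_n-R_{n,m})-\sigma_j(B_{n,m})|\le\omega(m)$ for all $j$, and the interlacing-type bound $|\sigma_j(A_n)-\sigma_j(A_n-R_{n,m})|$ being nonzero on at most $\mathrm{rank}\,R_{n,m}$ indices. Using uniform continuity of $F$ on $\mathbb{R}$ (pick $\delta$ so that $|x-y|<\omega(m)$ forces $|F(x)-F(y)|<\varepsilon$ for all sufficiently small $\omega(m)$) and boundedness $\|F\|_\infty<\infty$, one gets
\[
\mathrm{(I)}_{n,m}\le \varepsilon + 2\|F\|_\infty\,c(m),
\]
which tends to $0$ as $m\to\infty$ independently of $n$, once $\varepsilon$ is sent to $0$ after $m\to\infty$. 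In the Hermitian eigenvalue case, replace Mirsky by Weyl's inequality for eigenvalues of Hermitian matrices and note that the Hermiticity assumption keeps all relevant eigenvalues real, so that $F\in C_c(\mathbb{R})$ suffices and the same estimate goes through verbatim.

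Combining the three pieces: for any $\varepsilon>0$, first choose $m$ large so that $\mathrm{(III)}_m$ and the $m$-dependent contribution to $\mathrm{(I)}$ are below $\varepsilon$, then let $n\to\infty$ to annihilate $\mathrm{(II)}_{n,m}$ and the residual part of $\mathrm{(I)}_{n,m}$; since $\varepsilon$ is arbitrary the conclusion $\{A_n\}_n\sim_\sigma f$ (respectively $\sim_\lambda f$) follows from the definition in Definition~\ref{def:spectral_distribution}. The key technical point to get right is the simultaneous use of the low-rank and small-norm parts of the a.c.s.\ decomposition, which is precisely the reason the a.c.s.\ notion combines these two ingredients.
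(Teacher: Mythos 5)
First, note that the paper does not prove this lemma at all: it is quoted verbatim as Corollary 5.1 of the cited GLT book \cite{MR3674485}, so the only meaningful comparison is with the proof given there. Your overall scheme (triangle inequality in the three terms, kill the middle term by hypothesis~1 for fixed $m$, the last term by convergence in measure plus uniform continuity and boundedness of $F$ on the finite-measure set $D$, and the first term by splitting the a.c.s.\ correction into its small-norm and low-rank parts, finally taking $\limsup_{n\to\infty}$ before $m\to\infty$) is exactly the standard argument behind that corollary, and terms (II) and (III) are handled correctly.

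There is, however, a genuine flaw in your treatment of the low-rank part of (I). The claim that $|\sigma_j(A_n)-\sigma_j(A_n-R_{n,m})|$ ``is nonzero on at most $\mathrm{rank}\,R_{n,m}$ indices'' is false: already a rank-one perturbation generically moves \emph{all} $n$ singular values (take $X=\mathrm{diag}(1,2,\dots,n)$ and $R=\varepsilon vv^{*}$ with $v$ not orthogonal to any eigenvector). What is true, and what the correct proof uses, is the index-shift interlacing $\sigma_{j+k}(X+R)\le\sigma_j(X)\le\sigma_{j-k}(X+R)$ for $k=\mathrm{rank}\,R$, i.e.\ the singular-value counting functions of $X$ and $X+R$ differ by at most $k$ at every threshold. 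From this one gets $\bigl|\frac1n\sum_j F(\sigma_j(X+R))-\frac1n\sum_j F(\sigma_j(X))\bigr|\le 2\|F\|_\infty k/n$ directly for monotone $F$, and for general $F\in C_c(\mathbb{R})$ by writing $F$ as a difference of monotone pieces (bounded variation) or by first proving the statement for Lipschitz $F$ and concluding by density; this recovers your bound $\varepsilon+2\|F\|_\infty c(m)$ up to an inessential constant, but not by the route you state. Two smaller points: in the Hermitian eigenvalue case the a.c.s.\ decomposition only guarantees that $R_{n,m}+N_{n,m}=A_n-B_{n,m}$ is Hermitian, not the individual terms, so before applying Weyl you must replace $R_{n,m},N_{n,m}$ by their Hermitian parts (which at most doubles the rank bound and does not increase the norm bound); and the estimate on (I) is only available for $n\ge n_m$, so the clean formulation is $\limsup_{n\to\infty}\bigl|\frac1n\sum_jF(\sigma_j(A_n))-\phi_f(F)\bigr|\le\omega_F(\omega(m))+2\|F\|_\infty c(m)+|\phi_{f_m}(F)-\phi_f(F)|$ for each fixed $m$, followed by $m\to\infty$.
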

	
	In the next theorem, the authors prove the asymptotic inertia of $Y_nT_n[f]$ that is an evaluation of the number of positive, negative, and zero eigenvalues.
	
	\begin{theorem}\cite[Theorem 4.1]{HMS}\label{thm:main_hon}
		Suppose $f \in L^1([-\pi,\pi])$ with real Fourier coefficients and $Y_n \in \mathbb{R}^{n \times n}$ is the anti-identity matrix. Let $T_n[f]\in \mathbb{R}^{n \times n}$ be the Toeplitz matrix generated by $f$. Then 
		\[
		\{Y_nT_n[f]\}_n \sim_{\sigma}  f.
		\] Moreover, $Y_nT_n[f]$ is (real) symmetric and if $f$ is sparsely vanishing then
		\[
		|n^{+}(Y_nT_n[f])-n^{-}({Y_nT_n[f]})|=o(n),
		\]
		with $n^{+}(\cdot)$ and $n^{-}(\cdot)$ denoting the number of positive and the negative eigenvalues of its argument, respectively. If in addition $f$ is a trigonometric polynomial and not identically zero, then
		\[
		|n^{+}(Y_nT_n[f])-n^{-}({Y_nT_n[f]})|=O(1),
		\]
		where the constant hidden in the big $O$ notation is two times the degree of the polynomial $f$.
	\end{theorem}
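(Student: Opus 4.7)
The plan has three stages: a direct entrywise check for the symmetry and singular value distribution, a linear-algebraic isotropic-subspace argument that handles the trigonometric polynomial case, and an approximating-class-of-sequences argument that lifts the polynomial result to the sparsely vanishing $L^{1}$ setting.

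Stage one is immediate: from $(Y_n T_n[f])_{ij}=(T_n[f])_{n+1-i,j}=a_{n+1-i-j}$ one reads off that $Y_n T_n[f]$ is a real symmetric Hankel matrix, and the orthogonality of $Y_n$ preserves singular values, so $\{Y_n T_n[f]\}_n\sim_\sigma f$ follows directly from Theorem~\ref{lem:main}.

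Next I would treat the trigonometric polynomial case. If $f(\theta)=\sum_{k=-d}^{d}a_k e^{\mathbf{i}k\theta}$ is nonzero of degree $d$, the entry formula vanishes whenever $|n+1-i-j|>d$, so $H_n:=Y_n T_n[f]$ is supported in a band of width $2d+1$ around the anti-diagonal. Set $k=\lfloor(n-d)/2\rfloor$ and let $V_n$ consist of vectors supported on the first $k$ indices. For $y\in V_n$ and any $i\le k$, the sum $(H_n y)_i=\sum_{j\le k}a_{n+1-i-j}y_j$ is empty because a nonzero term would force $j\ge n+1-d-i>k$; thus $V_n$ is $H_n$-isotropic. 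The standard inertia inequality $\dim V_n\le r+\min(n^+,n^-)$, with $(n^+,n^-,r)$ the inertia of $H_n$, then gives $|n^+-n^-|\le n+r-2k\le d+1+r$. Since $|\lambda_i(H_n)|=\sigma_i(T_n[f])$, the nullity $r$ of $H_n$ equals that of $T_n[f]$, which for a nonzero banded Toeplitz symbol is controlled by a constant depending only on $d$ (essentially the multiplicities of the roots of the associated Laurent polynomial on the unit circle). The qualitative $O(1)$ conclusion is thus immediate; the sharper constant $2d$ requires a finer accounting of the isotropic boundary corrections.

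Finally, for a general sparsely vanishing $f\in L^1$, I would approximate $f$ in $L^1$ by trigonometric polynomials $p_m$. Standard Toeplitz approximation (as used throughout GLT theory) gives a decomposition $T_n[f]-T_n[p_m]=R_{n,m}+N_{n,m}$ with $\mathrm{rank}\,R_{n,m}\le c(m)n$, $\|N_{n,m}\|\le\omega(m)$ and $c(m),\omega(m)\to 0$; left-multiplication by the orthogonal $Y_n$ preserves both bounds. Cauchy interlacing under the rank perturbation and Weyl's inequalities under the norm perturbation show that $n^\pm(Y_n T_n[f])$ and $n^\pm(Y_n T_n[p_m])$ differ by at most $c(m)n$ plus the number of eigenvalues of $Y_n T_n[p_m]$ lying in $[-\omega(m),\omega(m)]$. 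Since $f$ is sparsely vanishing, Theorem~\ref{lem:sparsely vanishing} bounds the latter count by $\varepsilon(m)n$ with $\varepsilon(m)\to 0$; combined with the $O(1)$ polynomial bound of the previous stage applied to $p_m$, letting first $n\to\infty$ and then $m\to\infty$ delivers $|n^+-n^-|=o(n)$. The main obstacle I foresee is in the banded case: extracting the announced constant $2d$ rather than the weaker $d+1+r$ requires a tighter handle on the null space of the banded Toeplitz matrix, presumably via the root structure of the associated Laurent polynomial on the unit circle.
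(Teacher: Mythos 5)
Your route is essentially correct for the main claims, but note that this paper does not prove Theorem~\ref{thm:main_hon} at all: it is quoted from \cite{HMS}, and the only proof-related remark made here is that the inertia statement can be recovered from the new eigenvalue results (Theorem~\ref{thm:main_hon-bis} and Corollary~\ref{thm:main_hon-bis-corollary}), since for sparsely vanishing $f$ the limit function $\psi_{|f|}$ is nonzero a.e.\ and takes positive and negative values on sets of equal measure, forcing $n^{\pm}(Y_nT_n[f])=n/2+o(n)$ directly. Your argument is therefore a genuinely different, self-contained path. Stage one is the standard observation. Your stage two is correct as far as it goes: the band $n+1-d\le i+j\le n+1+d$ indeed misses the top-left $k\times k$ block with $k=\lfloor (n-d)/2\rfloor$, the totally isotropic bound $\dim V_n\le r+\min(n^+,n^-)$ is the right tool, and the nullity bound $r\le 2d$ is true --- though it follows from the elementary recurrence argument (a kernel vector of a banded Toeplitz matrix is determined by at most $2d$ consecutive entries once the full-band rows are used), rather than from root multiplicities on the unit circle. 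Stage three has the correct skeleton: trace-norm-based a.c.s.\ splitting of $Y_nT_n[f-p_m]$ into a symmetric low-rank plus symmetric small-norm term, rank and Weyl inertia perturbation bounds, then a count of small eigenvalues, and the double limit $n\to\infty$, $m\to\infty$.

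Two points need repair. First, the constant: your bound is $d+1+r\le 3d+1$, whereas the statement asserts the hidden constant is $2d$, and, as you concede, your isotropic-subspace argument does not reach it. The sharper constant comes from a different elementary argument (the one used in \cite{HMS}): keep the principal submatrix of $Y_nT_n[f]$ on the first $n-d$ indices (or the last $n-d$, according to whether $a_d$ or $a_{-d}$ is nonzero); its entries above the anti-diagonal vanish and its anti-diagonal is constantly $a_{\pm d}\neq 0$, so it is nonsingular with inertia balanced up to one, and Cauchy interlacing after deleting $d$ rows and columns yields the claimed $O(1)$ bound within the stated constant $2d$. Second, in stage three the appeal to Theorem~\ref{lem:sparsely vanishing} is not literal, because both the symbol $p_m$ and the threshold $\omega(m)$ vary with $m$, while that theorem concerns a fixed sequence and a threshold sent to zero after $n\to\infty$. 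You either should count the eigenvalues of $Y_nT_n[f]$ itself in $[-\omega(m),\omega(m)]$ (its singular values are distributed as $|f|$, so the normalized count has $\limsup_n$ bounded by essentially $\frac{1}{2\pi}\mu\{|f|\le 2\omega(m)\}$, which tends to $0$ because $f$ is sparsely vanishing), or keep $p_m$ but combine $p_m\to f$ in measure with a calibrated choice such as $\omega(m)=\|f-p_m\|_{L^1}^{1/2}$. With these adjustments your proof of the $\sim_\sigma$ statement, the symmetry, the $o(n)$ inertia bound, and the polynomial $O(1)$ bound is complete, and it is an interesting alternative both to the interlacing proof of \cite{HMS} and to the derivation via the distribution $\psi_{|f|}$ suggested in this paper.
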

	
	To end this section, the following definition regarding circulant matrices is given which will be used in the proof of our results in the preconditioning setting.
	
	\begin{definition}\cite{doi:10.1137/140974213}\label{circ-modulus}
		For any circulant matrix $C_n \in \mathbb{C}^{n\times n}$, the \emph{absolute value circulant matrix} $|C_n|$ of $C_n$ is defined by
		\begin{eqnarray}\nonumber
		|C_n|&=& (C_n^* C_n)^{1/2}\\\nonumber
		&=&(C_n C_n^*)^{1/2}\\\nonumber
		&=&F_n|\Lambda_n|F_n^*,
		\end{eqnarray} where 
		$F_n = \left[\frac{\omega^{jk}}{\sqrt{n}} \right]_{j,k=0}^{n-1},~ \omega=e^{-\mathbf{i}{2\pi\over n}}$, and $|\Lambda_n|$ is the diagonal matrix in the eigendecomposition of $C_n$ with all entries replaced by their magnitude.
	\end{definition}
	
	\begin{remark}
		By definition, $|C_n|$ is Hermitian positive definite provided that $C_n$ is nonsingular.
	\end{remark}

	\section{Main results}\label{sec:main}
	
	We provide our main results on singular value and eigenvalue distribution in this section. 
	
	In  Theorem \ref{thm:main_hon-bis} and Corollary \ref{thm:main_hon-bis-corollary}, we furnish the eigenvalue distribution of $\{Y_nT_n[f]\}_n$, which can be used for deriving the second part of Theorem \ref{thm:main_hon} by using Cauchy interlacing arguments. Theorem \ref{thm:main_hon-bis}  is completely new and its derivation indicates a general argument whose importance goes far beyond the specific case. We give such a general result in Theorem \ref{thm:main_hon-gen}. 
	
	The section is concluded by Theorem \ref{thm:main_hon-bis-corollary2} on preconditioned matrix sequences and by a few comments and remarks on the impact of the results.

	\subsection{A general tool and the spectral results on $\{Y_nT_n[f]\}_n$}\label{ssec:main1}

	Given $D \subset \mathbb{R}^k$ with $0<\mu_k(D)<\infty$, we define $\tilde D$ as $D\bigcup D_p$, where $p\in  \mathbb{R}^k$ and $D_p=p+D$, with the constraint that $D$ and $D_p$ have non-intersecting interior part, that is $D^\circ \bigcap  D_p^\circ  =\emptyset$. In this way $\mu_k(\tilde D)=2\mu_k(D)$. Given any $g$ defined over $D$, we define $\psi_g$ over $\tilde D$ in the following manner
	\begin{equation}\label{def-psi}
	\psi_g(x)=\left\{
	\begin{array}{cc}
	g(x), & x\in D, \\
	-g(x-p), & x\in D_p, \ x \notin D.
	\end{array}
	\right.\,
	\end{equation}

	\begin{theorem}\label{thm:main_hon-gen}
		Suppose $k_n=o(n)$ with $k_n \in \mathbb{Z}$ and $A(n)\in \mathbb{C}^{(\lceil n/2\rceil+k_n) \times (\lfloor n/2\rfloor  -k_n)}$. Let $B_n, E_n \in \mathbb{C}^{n \times n}$ be Hermitian such that
		\[
		B_n=\left[\begin{array}{cc}
		O_{\lceil n/2\rceil+k_n}  & A(n) \\
		A(n)^* & O_{\lfloor n/2\rfloor -k_n} \end{array}\right]+E_n,
		\]
		with $O_{\lceil n/2\rceil+k_n}$ and $O_{\lfloor n/2\rfloor -k_n}$ being the square null matrices of size $\lceil n/2\rceil+k_n$ and $\lfloor n/2\rfloor -k_n$ respectively.
		If $\{A(n)\}_n \sim_{\sigma}  g$, where $g\ge 0$ is defined over $D$ with positive, finite Lebesgue measure, and $\{E_n\}_n \sim_{\sigma}  0$, then 
		\[
		\{B_n\}_n \sim_{\lambda}  \psi_g
		\] over the domain $\tilde D$, with $\psi_g$ as in (\ref{def-psi}).
	\end{theorem}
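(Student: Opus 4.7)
The plan is to split the proof into two stages: first handle the ``unperturbed'' block anti-diagonal part $\hat B_n := B_n - E_n$ by a direct SVD computation, and then absorb the correction $E_n$ via the approximating-class-of-sequences machinery of Lemma~\ref{lem:Corollary5.1}.

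For the first stage, writing an SVD of $A(n)$ shows that the Hermitian matrix $\hat B_n$ has nonzero eigenvalues coming in opposite-sign pairs $\pm\sigma_i(A(n))$ for $i = 1, \ldots, q_n$, where $q_n := \min\{\lceil n/2\rceil+k_n,\,\lfloor n/2\rfloor-k_n\} = n/2 + o(n)$, together with $n - 2q_n = 2|k_n| + O(1) = o(n)$ forced zeros arising from the rectangular imbalance. For any $F \in C_c(\mathbb{R})$, this reduces the spectral average to
\[
\frac{1}{n}\sum_{j=1}^n F(\lambda_j(\hat B_n)) = \frac{q_n}{n}\cdot\frac{1}{q_n}\sum_{i=1}^{q_n}\bigl[F(\sigma_i(A(n))) + F(-\sigma_i(A(n)))\bigr] + o(1).
\]
Applying $\{A(n)\}_n \sim_\sigma g$ to both $F$ and its reflection $\check F(x) := F(-x)$, and using $q_n/n \to 1/2$, I would then match the resulting limit with $\phi_{\psi_g}(F)$ via a change of variable $y = x - p$ on the copy $D_p$ inside $\tilde D$, obtaining $\{\hat B_n\}_n \sim_\lambda \psi_g$.

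In the second stage I would invoke the eigenvalue (Hermitian) version of Lemma~\ref{lem:Corollary5.1} with the constant choices $B_{n,m} := \hat B_n$ and $f_m := \psi_g$: the hypothesis $\{B_{n,m}\}_n \sim_\lambda f_m$ is exactly the output of the first stage, $f_m \to \psi_g$ in measure is trivial, and Hermiticity of all the involved matrices is given. The only nontrivial point is the a.c.s.\ relation $\{\hat B_n\}_n \to \{B_n\}_n$, which requires, for each $m$, a splitting $E_n = R_{n,m} + N_{n,m}$ with $\mathrm{rank}(R_{n,m}) \leq c(m)\,n$, $\|N_{n,m}\| \leq \omega(m)$, and $c(m),\omega(m) \to 0$. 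This is obtained by truncating the SVD of $E_n$ at threshold $1/m$, with the hypothesis $\{E_n\}_n \sim_\sigma 0$ supplying, for each $m$, an index $n_m$ beyond which the number of singular values of $E_n$ exceeding $1/m$ is at most $n/m$. The main obstacle is precisely this quantitative step --- converting the purely distributional statement $\{E_n\}_n \sim_\sigma 0$ into a uniform-in-$n$ rank-plus-norm splitting --- which is the standard bridge from a zero singular-value distribution to an a.c.s.\ approximation within the GLT framework.
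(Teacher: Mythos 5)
Your proposal is correct and follows essentially the same route as the paper: an SVD-based analysis of the unperturbed block anti-diagonal part showing its spectrum is $\pm\sigma_i(A(n))$ plus $o(n)$ zeros (hence $\sim_\lambda \psi_g$), followed by the Hermitian a.c.s.\ corollary (Lemma~\ref{lem:Corollary5.1}) with the constant-in-$m$ approximating sequence to absorb $E_n$. The ``obstacle'' you flag --- converting $\{E_n\}_n \sim_\sigma 0$ into a rank-plus-small-norm splitting via a truncated SVD --- is exactly the standard characterization of zero-distributed sequences that the paper invokes implicitly, so your argument just makes that step explicit.
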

	
	\begin{proof}
		For the sake of notational simplicity we set $A=A(n)$ and we define the auxiliary matrix $G_n$ as follows
		\[
		G_n=\left[\begin{array}{cc}
		O_{\lceil n/2\rceil+k_n}  & A \\
		A^* & O_{\lfloor n/2\rfloor -k_n} \end{array}\right].
		\]
		
		Fixing $n$ and supposing $k_n \geq 0$, we define $m = \lfloor n/2\rfloor  -k_n$ and $M = \lceil n/2\rceil+k_n$. Then, we consider the (full) singular value decomposition of $A=U_M \Sigma V_m^*$, where $U_M,V_m$ are unitary matrices of size $M$ and $m$, respectively, and $\Sigma$ is the rectangular diagonal matrix containing the singular values $\sigma_1,\dots,\sigma_m$. Denote by $O_{M,m}$ the rectangular null matrix of size $M \times m$. We have
		\begin{equation}\label{eqn:factorizationCn}
		G_n=\left[\begin{array}{cc}
		U_M   & O_{M,m} \\
		O_{m,M} & V_m \end{array}\right]
		\left[\begin{array}{cc}
		O_M   & \Sigma \\
		\Sigma^T & O_m \end{array}\right]
		\left[\begin{array}{cc}
		U_M^*   & O_{M,m} \\
		O_{m,M} & V_m^* \end{array}\right]
		\end{equation}
		which is similar to
		\[
		S_n = \left[\begin{array}{cc}
		O_M   & \Sigma \\
		\Sigma^T & O_m \end{array}\right].
		\]
		
		Notice that the matrix $\Sigma$ can be written as
		\begin{equation}\label{eqn:SigmaTilde}
			\Sigma=\left[\begin{array}{c}
			\tilde{\Sigma}_m \\
			O_{k,m}
			\end{array}\right],
			\qquad \tilde{\Sigma}_m=\left[\begin{array}{ccc}
			\sigma_1 & & \\
			& \ddots & \\
			& & \sigma_m
			\end{array}\right],
			\qquad k=M-m,
		\end{equation}
		where $\Sigma= \tilde{\Sigma}_m$ if $k = 0$.
		Under the hypothesis that $k_n \geq 0$, if the fixed $n$ is even, the index $k$ is equal to $2k_n$. Otherwise, it is equal to $2k_n+1$. 
		
		Using (\ref{eqn:SigmaTilde}), the matrix $S_n$ can be written as
		\begin{eqnarray}\nonumber
		S_n &=& \left[\begin{array}{cc}
		O_M   & \Sigma \\
		\Sigma^T & O_m \end{array}\right]= \left[\begin{array}{ccc}
		O_{m} & O_{m,k} & \tilde{\Sigma}_m  \\
		O_{k,m} & O_{k} & O_{k,m} \\
		\tilde{\Sigma}_m & O_{m,k} & O_{m}
		\end{array}\right],
%\\ \nonumber
%		&=& \left[\begin{array}{ccc}
%		O_{m} & O_{m,k} & I_m \\
%		O_{k,m} & O_{k} & O_{k,m} \\
%		I_m & O_{m,k} & O_{m}
%		\end{array}\right] 
%		\underbrace{\left[\begin{array}{ccc}
%		O_m & O_{m,k} & \tilde{\Sigma}_m \\
%		O_{k,m} &O_{k} & O_{k,m} \\
%		\tilde{\Sigma}_m & O_{m,k} & O_m
%		\end{array}\right]}_{S_n}
%		\left[\begin{array}{ccc}
%		O_{m} & O_{m,k} & I_m \\
%		O_{k,m} & O_{k} & O_{k,m} \\
%		I_m & O_{m,k} & O_{m}
%		\end{array}\right] 
%		,
		\end{eqnarray}
		where, if $k = 0$, the central row and column are not present and which, up to similarity by an obvious permutation, can be written as the direct sum of $O_{k}$ and
\[
\left[\begin{array}{cc}
		O_{m} &  \tilde{\Sigma}_m  \\
		\tilde{\Sigma}_m &  O_{m}
		\end{array}\right].
\] 
The latter matrix is a $2\times 2$ block circulant and hence it can be diagonalized by the $2\times 2$ block Fourier matrix so that
\[
\left[\begin{array}{cc}
		O_{m} &  \tilde{\Sigma}_m  \\
		\tilde{\Sigma}_m &  O_{m}
		\end{array}\right]
= \frac{\sqrt{2}}{2} 
		 \left[\begin{array}{cc}
		I_{m} & I_m \\
		I_m &  -I_{m}
		\end{array}\right]
 \left[\begin{array}{cc}
		\tilde{\Sigma}_m & O_m \\
		O_m & -\tilde{\Sigma}_m
		\end{array}\right] 	
 \frac{\sqrt{2}}{2} 
		 \left[\begin{array}{cc}
		I_{m} & I_m \\
		I_m &  -I_{m}
		\end{array}\right].
\]
Therefore, putting together the above information, we can  write the factorization
		\[
		S_n= \left[\begin{array}{ccc}
		O_{m} & O_{m,k} &\tilde{\Sigma}_m\\
		O_{k,m} & O_{k} & O_{k,m} \\
		\tilde{\Sigma}_m & O_{m,k} & O_{m}
		\end{array}\right] 
		= 
		Q_n
		 \left[\begin{array}{ccc}
		\tilde{\Sigma}_m & O_{m,k} & O_m \\
		O_{k,m} & O_{k} & O_{k,m} \\
		O_m & O_{m,k} & -\tilde{\Sigma}_m
		\end{array}\right] 		
		Q_n,
		\]
		where $Q_n$ is the orthogonal matrix
		\[
		Q_n = \frac{\sqrt{2}}{2} 
		 \left[\begin{array}{ccc}
		I_{m} & O_{m,k} & I_m \\
		O_{k,m} & \sqrt{2}I_{k} & O_{k,m} \\
		I_m & O_{m,k} & -I_{m}
		\end{array}\right]
		\]
given by the direct sum of the identity of size $k$ and of the previous $2\times 2$ block Fourier matrix. Thus, we know that $G_n$ is similar to the block diagonal matrix
\begin{equation}\label{eqn:factorizationSigma}
\left[
\begin{array}{ccc}
\tilde{\Sigma}_m & O_{m,k} & O_m \\
O_{k,m} & O_{k} & O_{k,m} \\
O_m & O_{m,k} & -\tilde{\Sigma}_m
\end{array}
\right].
\end{equation}
and hence (\ref{eqn:factorizationSigma}) implies that we can write the eigenvalues of the matrix $G_n$ for the case $k_n \geq 0$. 
A similar factorization can be obtained for $k_n <0$, by defining $m = \lceil n/2\rceil+k_n$ and $M = \lfloor n/2\rfloor  -k_n$.
		
		In particular, the eigenvalues of $G_n$ are given by the set of the singular values of $A_n$, by the set of the negation of the singular values of $A_n$ and, in addition to these, at most $k = o(n)$ zero eigenvalues. From the latter, it is transparent that
		\[
                     \{G_n\}_n \sim_{\lambda}  \psi_g.
                 \]
		Finally, since all the involved matrices are Hermitian and the perturbation matrix sequence is zero distributed, i.e, $\{E_n\}_n \sim_{\lambda,\sigma}  0$, the desired result follows directly from the second part of Lemma \ref{lem:Corollary5.1}, taking into account that $\{\{G_{n}\}_n\}_m$ is a constant class of sequences (that is not depending on the variable $m$) and it is nevertheless an a.c.s for $\{B_n\}_n$.
	\end{proof}
	 We now employ Theorem \ref{thm:main_hon-gen} in the specific setting of symmetrized Toeplitz sequences.
	
	\begin{theorem}\label{thm:main_hon-bis}
		Suppose $f \in L^1([-\pi,\pi])$ with real Fourier coefficients and $Y_n \in \mathbb{R}^{n \times n}$ is the anti-identity matrix. Let $T_n[f]\in \mathbb{R}^{n \times n}$ be the Toeplitz matrix generated by $f$. Then 
		\[
		\{Y_nT_n[f]\}_n \sim_{\lambda}  \psi_{|f|}
		\]
		over the domain  $\tilde D$ with $D=[0,2\pi]$ and $p=-2\pi$.
	\end{theorem}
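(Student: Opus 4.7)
The plan is to apply the general Theorem \ref{thm:main_hon-gen} after expressing $Y_n T_n[f]$ in the required $2\times 2$ block form. I would partition rows and columns of $Y_n T_n[f]$ into two groups of sizes $\lceil n/2\rceil$ and $\lfloor n/2\rfloor$, i.e.\ I take $k_n = 0$ in the hypotheses of Theorem \ref{thm:main_hon-gen}. Using the identity $(Y_n T_n[f])_{s,t} = a_{n+1-s-t}$, a direct computation shows that, for $n$ even, the $(1,2)$ block is exactly $Y_{n/2}T_{n/2}[f]$ (and for $n$ odd, a rectangular analogue $Y_{\lceil n/2\rceil}T_{\lceil n/2\rceil,\lfloor n/2\rfloor}[f]$, up to a trivial phase shift of the index that does not affect $|f|$), while the $(1,1)$ block is a Hankel matrix built solely from the nonnegatively-indexed Fourier coefficients $\{a_k\}_{k\ge 1}$ and the $(2,2)$ block is its mirror built from $\{a_k\}_{k\le -1}$. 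The symmetry $Y_n T_n[f] = (Y_n T_n[f])^T$ guarantees that the $(2,1)$ block is $A(n)^\ast$, as Theorem \ref{thm:main_hon-gen} requires.

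The core of the argument is then to show that the block-diagonal Hermitian matrix sequence $\{E_n\}_n$ formed by these two corner Hankel blocks is singular-value zero-distributed. This is essentially the classical statement that Hankel matrices generated by an $L^1$ function are zero distributed, which I would reprove by combining Lemma \ref{lem:Corollary5.1} with the a.c.s.\ framework of Definition \ref{def:ACS}. Approximating $f$ in $L^1$-norm by trigonometric polynomials $p_m$ of degree $m$, the corner blocks corresponding to $p_m$ are banded Hankel matrices of rank at most $2m$, hence provide a term $R_{n,m}$ of rank $o(n)$ for each fixed $m$; the residual built from $f - p_m$ supplies the term $N_{n,m}$ whose norm is driven to zero by $\|f-p_m\|_{L^1}\to 0$. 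Taking $f_m \equiv 0$ in Lemma \ref{lem:Corollary5.1} yields $\{E_n\}_n \sim_\sigma 0$, as needed.

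Finally, Theorem \ref{lem:main} applied to the Toeplitz block $T_{n/2}[f]$ (or its rectangular analogue for odd $n$), together with the unitarity of $Y_{\lceil n/2\rceil}$, gives $\{A(n)\}_n \sim_\sigma |f|$; reinterpreting $|f|$ on $D=[0,2\pi]$ via $2\pi$-periodicity matches the domain demanded by Theorem \ref{thm:main_hon-gen}. Instantiating that theorem with $g = |f|$, $D = [0,2\pi]$, and $p = -2\pi$ delivers exactly the claimed distribution $\{Y_n T_n[f]\}_n \sim_\lambda \psi_{|f|}$ on $\tilde D = [-2\pi,2\pi]$. The main obstacle is the second step: the zero distribution of one-sided Hankel corner blocks is folklore, but one must write down the a.c.s.\ decomposition with uniform rank and norm estimates for both the positive-index and negative-index corner simultaneously, and run it through the correct variant of Lemma \ref{lem:Corollary5.1} suited to $L^1$ generating functions; the remaining work is bookkeeping.
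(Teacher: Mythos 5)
Your overall architecture is exactly the paper's: the same splitting of $Y_nT_n[f]$ into the antidiagonal pair of blocks $Y_\nu T_\nu[f]$ plus the two Hankel corners built from the positively/negatively indexed coefficients, the same treatment of odd $n$ as a low-rank/bordered perturbation, and the same final appeal to Theorem \ref{thm:main_hon-gen} with $g=|f|$, $k_n=0$, $D=[0,2\pi]$, $p=-2\pi$. The only place where you diverge is the step you single out as the ``main obstacle'', and there your sketch as written does not work. You propose to get $\{E_n\}_n\sim_\sigma 0$ by approximating $f$ with trigonometric polynomials $p_m$ in the $L^1$ norm and claiming that the Hankel corners generated by $f-p_m$ give a term $N_{n,m}$ whose \emph{spectral} norm tends to zero because $\|f-p_m\|_{L^1}\to 0$. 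This estimate is false: the spectral norm of a Toeplitz or Hankel matrix is controlled by the $L^\infty$ (or Wiener) norm of its symbol, not by its $L^1$ norm, and for a general $f\in L^1$ the residual $f-p_m$ cannot be made small in any of those stronger norms (indeed $\|T_n[g]\|\to\|g\|_{L^\infty}$, which may be infinite). So the decomposition $A_n=B_{n,m}+R_{n,m}+N_{n,m}$ with $f_m\equiv 0$ cannot be completed with the norm bound you assert, and Lemma \ref{lem:Corollary5.1} cannot be invoked in the form you describe.

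The gap is repairable in two ways. The paper's route is simply to quote the known result of Fasino and Tilli \cite{FasinoTilli}: Hankel matrix sequences generated by $L^1$ symbols are singular-value zero distributed, which immediately gives $\{E_n\}_n\sim_{\lambda,\sigma}0$ since the corners are symmetric. If you insist on a self-contained a.c.s.\ proof, the correct estimate is in trace norm, not spectral norm: writing $H_\nu[g]=\frac{1}{2\pi}\int_{-\pi}^{\pi} g(\theta)\,w(\theta)w(\theta)^T\,d\theta$ with a rank-one kernel of trace norm $\nu$ gives $\|H_\nu[g]\|_{\mathrm{tr}}\le \frac{\nu}{2\pi}\|g\|_{L^1}$, and then the standard splitting of a matrix of small trace norm into a small-rank part plus a small-spectral-norm part (put the singular values above $\sqrt{\epsilon}$ into $R_{n,m}$, the rest into $N_{n,m}$) produces a legitimate a.c.s.\ and closes the argument. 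Apart from this, your handling of odd $n$ via a rectangular block is a harmless variant of the paper's choice (the paper pads $Y_\nu T_\nu[f]$ with a zero row and dumps the middle row and column into a rank-two, hence zero-distributed, correction), but you should either adopt that padding or justify the singular value distribution of the rectangular block explicitly.
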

	
	\begin{proof}
		We let $H_\nu[f,-]$ be the $\nu$-by-$\nu$ Hankel matrix generated by $f$ containing the Fourier coefficients from $a_{-1}$ in the position $(1,1)$ to $a_{-2\nu+1}$ in the position $(\nu,\nu)$. Analogously, we let $H_\nu[f,+]$ be the $\nu$-by-$\nu$ Hankel matrix generated by $f$ containing the Fourier coefficients from $a_{1}$ in the position $(1,1)$ to $a_{2\nu-1}$ in the position $(\nu,\nu)$.
		
		We start by considering the case of even $n$ and writing $Y_nT_n[f]$ as a $2$-by-$2$  block matrix of size $n=2\nu$, i.e.
		\[
		Y_nT_n[f] = \left[\begin{array}{cc}
		Y_\nu H_\nu[f,+] Y_\nu   & Y_\nu T_\nu[f] \\
		Y_\nu T_\nu[f]  & H_\nu[f,-] \end{array}\right].
		\]
		Note that for Lebesgue integrable $f$, $H_\nu[f,+]$ is exactly the Hankel matrix generated by $f$ according to the definition given in \cite{FasinoTilli}: in that paper it was proven that $\{H_\nu[f,+]\}_n\sim_{\sigma}  0$. Since in our setting  $H_\nu[f,+]$  is symmetric for every $\nu$, it follows that  $\{H_\nu[f,+]\}_n\sim_{\lambda}  0$. Hence, with $Y_\nu$ being both symmetric and orthogonal, we deduce that the matrix is symmetric with the same singular values as $H_\nu[f,+]$.
		Therefore
		\[
		\{Y_\nu H_\nu[f,+] Y_\nu\}_n\sim_{\lambda,\sigma}  0.
		\]
		Similarly, we have
		\[
		\{ H_\nu[f,-] \}_n\sim_{\lambda,\sigma}  0
		\]
		since $H_\nu[f,-]=H_\nu[\bar f,+]$ and $\bar f$ (being the conjugate of $f$) is Lebesgue integrable if and only if $f$ is Lebesgue integrable.
		
		Therefore, the matrix sequence $\{Y_nT_n[f]\}_n$ can be written as the sum of the matrix sequence whose eigenvalues are clustered at zero   
		\[
		\{  E_n\}_n= \left\{
		\left[\begin{array}{cc}
		Y_\nu H_\nu[f,+] Y_\nu   &  O\\
		O  & H_\nu[f,-] \end{array}\right]\right\}_n
		\]
		and the matrix sequence
		\[
		\left\{
		\left[\begin{array}{cc}
		O  &  Y_\nu T_\nu[f] \\
		Y_\nu T_\nu[f]   & O \end{array}\right]
		\right\}_n
		\]
		whose eigenvalues are $\pm \sigma_j(Y_\nu T_\nu[f])=\pm \sigma_j( T_\nu[f])$,
		$j=1,\ldots,\nu$. 
		
		Hence, the claimed thesis follows from the general Theorem \ref{thm:main_hon-gen} with $g=|f|$, $A=A^*=A^T=
		Y_\nu T_\nu[f]$, and $k_n=0$.
		
		In the case where $n$ is odd, the analysis is of the same type as before with a few slight technical changes.
		
		By setting $\nu=\lfloor n/2 \rfloor$ and $\mu= \lceil n/2 \rceil$, we have
		\[
		Y_nT_n[f] = \left[\begin{array}{ccc}
		Y_\nu H_\nu[f \cdot e^{-\mathbf{i}\theta},+] Y_\nu & v   & Y_\nu T_\nu[f] \\
		v^T & a_0 & w^T \\
		Y_\nu T_\nu[f]  & w & H_\nu[f \cdot e^{\mathbf{i}\theta},-] \end{array}\right],
		\]
		provided that $n\neq 1$.
		Therefore, the matrix sequence $\{Y_nT_n[f]\}_n$ can be written as the sum of the matrix sequence whose eigenvalues are clustered at zero, that is $\{ E_n \}_n$, where $E_n=E_n'+E_n''$ with
		\[
		E_n' =
		\left[\begin{array}{cc}
		Y_\mu H_\mu[f\cdot e^{\mathbf{i}  \theta },+] Y_\mu   &  O\\
		O  & H_\nu[f \cdot e^{\mathbf{i}\theta},-] \end{array}\right],
		\]
		\[
		Y_\mu H_\mu[f\cdot e^{\mathbf{i}  \theta },+] Y_\mu =
		\left[\begin{array}{cc}
		Y_\nu H_\nu[f \cdot e^{-\mathbf{i}\theta},+] Y_\nu & v   \\
		v^T & a_0
		\end{array}\right],
		\]
		\[
		E_n'' =
		\left[\begin{array}{ccc}
		O & {\bf 0}   & O \\
		{\bf 0}^T & 0 & w^T \\
		O  & w & O \end{array}\right],
		\]
		and the matrix sequence
		\[
		\left\{
		\left[\begin{array}{ccc}
		O  &  {\bf 0} & Y_\nu T_\nu[f] \\
		{\bf 0}^T & 0 &  {\bf 0}^T \\
		Y_\nu T_\nu[f] & {\bf 0}   & O \end{array}\right]
		\right\}_n
		\]
		whose eigenvalues are $0$ with multiplicity $1$ and $\pm \sigma_j(Y_\nu T_\nu[f])$,
		$j=1,\ldots,\nu$. Note that we have $\sigma_j(Y_\nu T_\nu[f])= \sigma_j( T_\nu[f])$,
		$j=1,\ldots,\nu$, again from the singular value decomposition of $Y_\nu T_\nu[f]$, as in the proof of Theorem \ref{thm:main_hon-gen} when dealing with the matrix $G_n$ (see (\ref{eqn:factorizationSigma})).
		
		Consequently, the claimed thesis follows from the general Theorem \ref{thm:main_hon-gen} with $g=|f|$, 
		\[
		A= A(n)= \left[\begin{array}{c}
		Y_\nu T_\nu[f] \\
		{\bf 0}^T  \end{array}\right],\quad \quad 
		A^*=A(n)^*=A(n)^T=\left[\begin{array}{cc}
		Y_\nu T_\nu[f] &  {\bf 0} 
		\end{array}\right], 
		\]
		and $k_n=0$. 
	\end{proof}
	
	\begin{corollary}\label{thm:main_hon-bis-corollary}
		Suppose $f \in L^1([-\pi,\pi])$ with real Fourier coefficients and $Y_n \in \mathbb{R}^{n \times n}$ is the anti-identity matrix. Let $T_n[f]\in \mathbb{R}^{n \times n}$ be the Toeplitz matrix generated by $f$. Then, 
		\[
		\{Y_nT_n[f]\}_n \sim_{\lambda}  \phi_{|f|}
		\]
		over the domain  $[-2\pi,2\pi]$ with $\phi_g$ defined in the following way
		\[
		\phi_g(\theta)=\left\{
		\begin{array}{cc}
		g(\theta), & \theta\in [0,2\pi], \\
		-g(-\theta), & \theta\in [-2\pi,0).
		\end{array}
		\right.\,
		\]
	\end{corollary}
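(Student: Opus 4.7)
The plan is to derive the corollary as essentially a cosmetic reformulation of Theorem \ref{thm:main_hon-bis}, using the fact that a spectral symbol is determined by the induced functional on $C_c(\mathbb{C})$ rather than by its pointwise values: two different measurable rearrangements describing the same pushforward measure give the same spectral distribution. In our setting, Theorem \ref{thm:main_hon-bis} already yields $\{Y_nT_n[f]\}_n \sim_\lambda \psi_{|f|}$ on $\tilde D=[-2\pi,2\pi]$, where the ``negative half'' of $\psi_{|f|}$ is defined by a \emph{translation} (subtracting $p=-2\pi$), whereas $\phi_{|f|}$ in the corollary uses a \emph{reflection}. Because $|f|$ is $2\pi$-periodic, a translation and a reflection produce the same integral against any $F\in C_c(\mathbb{C})$.

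Concretely, I would first unpack the two functionals on the common domain $[-2\pi,2\pi]$ of Lebesgue measure $4\pi$. For $\psi_{|f|}$ one has
\[
\frac{1}{4\pi}\int_{-2\pi}^{2\pi} F(\psi_{|f|}(x))\,dx
=\frac{1}{4\pi}\left[\int_{0}^{2\pi} F(|f|(x))\,dx+\int_{-2\pi}^{0} F(-|f|(x+2\pi))\,dx\right].
\]
For $\phi_{|f|}$ one has
\[
\frac{1}{4\pi}\int_{-2\pi}^{2\pi} F(\phi_{|f|}(\theta))\,d\theta
=\frac{1}{4\pi}\left[\int_{0}^{2\pi} F(|f|(\theta))\,d\theta+\int_{-2\pi}^{0} F(-|f|(-\theta))\,d\theta\right].
\]

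Next I would reduce both ``negative'' integrals to the same quantity. In the first, the periodic substitution $x\mapsto x+2\pi$ converts $\int_{-2\pi}^{0} F(-|f|(x+2\pi))\,dx$ into $\int_{0}^{2\pi} F(-|f|(u))\,du$. In the second, the reflection $\theta\mapsto -\theta$ converts $\int_{-2\pi}^{0} F(-|f|(-\theta))\,d\theta$ into the same expression $\int_{0}^{2\pi} F(-|f|(u))\,du$. Hence the two functionals coincide on every $F\in C_c(\mathbb{C})$, so the spectral distributions induced by $\psi_{|f|}$ and $\phi_{|f|}$ on $[-2\pi,2\pi]$ are identical, and the corollary follows from Theorem \ref{thm:main_hon-bis} together with Definition \ref{def:spectral_distribution}.

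There is essentially no obstacle here: the only subtlety worth flagging is that $\psi_{|f|}$ and $\phi_{|f|}$ are pointwise different functions (their negative branches differ by translation versus reflection), but this is invisible to the notion of spectral distribution because $|f|$ is $2\pi$-periodic and both branches are measure-preserving rearrangements of the same nonnegative symbol $|f|$ on $[0,2\pi]$. The advantage of the reformulation $\phi_{|f|}$ is purely expository: it matches the form $\pm g(\pm\theta)$ anticipated in the introduction and it is the version convenient for recovering the inertia count in \cite[Theorem 4.1]{HMS} via Cauchy interlacing arguments.
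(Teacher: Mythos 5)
Your proposal is correct and follows essentially the same route as the paper: the paper observes that $\phi_{|f|}$ is a rearrangement of $\psi_{|f|}$, so that $\int_{-2\pi}^{2\pi}F(\phi_{|f|})=\int_{-2\pi}^{2\pi}F(\psi_{|f|})$ for every $F\in C_c(\mathbb{C})$, and then invokes Theorem \ref{thm:main_hon-bis}; you simply make the rearrangement explicit via the two changes of variables. One cosmetic remark: the identity of the two ``negative branch'' integrals needs only translation- and reflection-invariance of the Lebesgue measure (both branches evaluate $-|f|$ on $[0,2\pi]$ up to null sets), so the $2\pi$-periodicity of $|f|$ is not really the operative ingredient.
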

	
	\begin{proof}
		We observe that for any $F$ continuous with bounded support
		\[
		\int_{-2\pi}^{2\pi} F(\phi_{|f|})=\int_{-2\pi}^{2\pi} F(\psi_{|f|}),
		\]
		i.e. $\phi_{|f|}$ is a rearrangement of $\psi_{|f|}$ (and vice versa) \cite[Section 3.2]{MR3674485}. Hence, by the very definition of distribution, we have
		$\{Y_nT_n[f]\}_n \sim_{\lambda}  \phi_{|f|}$ if and only if $\{Y_nT_n[f]\}_n \sim_{\lambda}  \psi_{|f|}$. Therefore, the desired result is an immediate consequence of Theorem \ref{thm:main_hon-bis}. 
	\end{proof}
	
	Considering real-valued $f$, we remark that the spectral distribution of $\{Y_nT_n[f]\}_n$ is in stark contrast to that of $\{T_n[f]\}_n$ provided in Theorem \ref{lem:main} (the generalized Szeg{\H{o}} theorem), even though their singular value distributions are equivalent. Finally, the techniques given in this section can be adapted verbatim to the case of Toeplitz structures generated by $s\times s$ matrix-valued functions.

\begin{theorem}\label{thm:main_hon-ter}
		Suppose that the function $f$ is defined on $[-\pi,\pi]$ and is $s\times s$ matrix-valued. Assume that $f$ is Lebesgue integrable, i.e. $f_{j,k}\in L^1([-\pi,\pi])$, $j,k=1,\ldots s$, such that each Fourier coefficient of $f$ is a $s\times s$ Hermitian matrix and take $Y_n \in \mathbb{R}^{n \times n}$ as the anti-identity matrix. Let $T_{n,s}[f]\in \mathbb{C}^{sn \times sn}$ be the block Toeplitz matrix generated by $f$. Then 
\[
\{(Y_n\otimes I_s)T_{n,s}[f]\}_n \sim_{\lambda}  \psi_{|f|}, \quad\quad |f|=(f f^*)^{1/2},
\]
over the domain  $\tilde D$ with $D=[0,2\pi]$ and $p=-2\pi$ that is
\[
\lim_{n\to \infty} \frac{1}{sn}\sum_{j=1}^{sn}F(\lambda_j((Y_n\otimes I_s)T_{n,s}[f]))=
 \frac{1}{2\pi}\int_{-\pi}^{\pi} \frac{1}{s}\sum_{j=1}^{s} F(\lambda_j(|f|(\theta))\,d\theta.
\]
which is the generalization of the eigenvalue distribution in Item 2. of Definition \ref{def:spectral_distribution} for matrix-valued symbols with
\[
 \phi_{|f|} (F)=\frac{1}{2\pi}\int_{-\pi}^{\pi} \frac{1}{s}\sum_{j=1}^{s} F(\lambda_j(|f|(\theta)))\,d\theta.
\]
\end{theorem}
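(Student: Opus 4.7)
The approach is to transcribe the proof of Theorem \ref{thm:main_hon-bis} to the block setting and reduce everything to a matrix-valued version of Theorem \ref{thm:main_hon-gen}. The structural part of the proof of Theorem \ref{thm:main_hon-gen}, namely the factorization in (\ref{eqn:factorizationCn})--(\ref{eqn:factorizationSigma}), depends only on the singular value decomposition of the off-diagonal block $A(n)$ and is completely agnostic to whether its entries are scalars or $s\times s$ blocks: the eigenvalues of the anti-block-diagonal piece are always $\pm\sigma_j(A(n))$ together with at most $o(n)$ extra zeros. Thus the main points will be to identify the right block splitting, to verify that the diagonal blocks are harmless, and to interpret the input distribution correctly in the matrix-valued sense.

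For $n=2\nu$ even, I would split
\[
(Y_n\otimes I_s)T_{n,s}[f]=\begin{bmatrix} (Y_\nu\otimes I_s)H_{\nu,s}[f,+](Y_\nu\otimes I_s) & (Y_\nu\otimes I_s)T_{\nu,s}[f] \\ (Y_\nu\otimes I_s)T_{\nu,s}[f] & H_{\nu,s}[f,-] \end{bmatrix},
\]
where $H_{\nu,s}[f,\pm]$ denote the natural block Hankel matrices generated by $f$. The hypothesis that every Fourier coefficient $A_k$ is Hermitian is exactly what makes the whole matrix Hermitian, by the same block-position matching used in the scalar proof. For the diagonal blocks, an entrywise application of the Fasino--Tilli lemma \cite{FasinoTilli} to each of the $s^2$ scalar Hankel sequences generated by $f_{j,k}\in L^1$ shows that both diagonal blocks form Hermitian, singular-value-zero-distributed sequences; hence they contribute the perturbation $\{E_n\}_n\sim_{\lambda,\sigma}0$ required by Theorem \ref{thm:main_hon-gen}. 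The case of odd $n$ would be handled as in the proof of Theorem \ref{thm:main_hon-bis}, by isolating a middle strip of $s$ rows and $s$ columns; this strip contributes only $O(s)=o(n)$ further nontrivial eigenvalues and so does not affect the asymptotic distribution.

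Next I would take $A(n)=(Y_\nu\otimes I_s)T_{\nu,s}[f]$. By the unitarity of $Y_\nu\otimes I_s$, it has the same singular values as $T_{\nu,s}[f]$, and the block-valued analogue of Theorem \ref{lem:main} recalled in \cite{MR3674485} yields $\{T_{\nu,s}[f]\}_\nu\sim_\sigma f$ in the matrix-valued sense, that is, distributed via the singular values of $f(\theta)$, or equivalently via the eigenvalues of $|f|(\theta)=(ff^*)^{1/2}(\theta)$. Plugging this into the matrix-valued extension of Theorem \ref{thm:main_hon-gen}, the eigenvalues of the anti-block-diagonal part split into a positive half distributed as the eigenvalues of $|f|$ over $D=[0,2\pi]$ and a negative half distributed as their negations over $D_p$, producing precisely $\psi_{|f|}$ in the block sense. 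The explicit integral formula in the statement then follows from the change of variable $\theta\mapsto\theta/2$ (reducing $[0,2\pi]$ to $[-\pi,\pi]$) and the matrix-valued definition of spectral distribution recalled in the theorem itself.

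I expect the only genuine technical obstacle to be the precise formulation of the matrix-valued extension of Theorem \ref{thm:main_hon-gen}: one has to check that the structural factorization in (\ref{eqn:factorizationCn})--(\ref{eqn:factorizationSigma}), when applied to an $A(n)$ whose singular values are block-distributed as a matrix-valued symbol $f$, still yields an eigenvalue distribution of the form $\psi_{|f|}$ that correctly averages the $s$ eigenvalues of $|f|(\theta)$ at each point. Once this routine but notationally heavier step is settled, Lemma \ref{lem:Corollary5.1} applied to the constant a.c.s.\ $\{\{G_n\}_n\}_m$ (as in the scalar proof) closes the argument, and Corollary \ref{thm:main_hon-bis-corollary} carries over verbatim to produce, if desired, the equivalent $\phi_{|f|}$ formulation.
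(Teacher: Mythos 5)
Your proposal is correct and takes essentially the same route the paper intends: the paper offers no separate proof of Theorem \ref{thm:main_hon-ter}, only the remark that the scalar techniques adapt verbatim, and your fleshed-out adaptation --- the $2$-by-$2$ block splitting with block Hankel diagonal blocks shown to be zero-distributed via \cite{FasinoTilli} (whose cited paper indeed already treats block Hankel structures), the use of the Hermitian Fourier coefficients to get Hermitian structure, the unitary reduction of $A(n)=(Y_\nu\otimes I_s)T_{\nu,s}[f]$ to $T_{\nu,s}[f]$ with the block Szeg\H{o}/Avram--Parter singular value distribution, and the SVD-based matrix-valued version of Theorem \ref{thm:main_hon-gen} followed by Lemma \ref{lem:Corollary5.1} --- is precisely that adaptation. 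The only cosmetic slip is the closing ``change of variable $\theta\mapsto\theta/2$'': the passage from $\psi_{|f|}$ on $[-2\pi,2\pi]$ to an integral over $[-\pi,\pi]$ rests on the $2\pi$-periodicity of $f$ and on splitting $\tilde D$ into its positive and negative branches, not on a dilation, but this is bookkeeping and does not affect the validity of the distribution result.
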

	
\subsection{Spectral results on preconditioned matrix sequences}\label{ssec:main2}
	
	In this subsection, we use the results of the previous subsection in order to deal with the eigenvalue distribution of certain preconditioned matrix sequences.
	
	\begin{theorem}\label{thm:main_hon-bis-corollary2}
		Suppose $f \in L^1([-\pi,\pi])$ with real Fourier coefficients and $Y_n \in \mathbb{R}^{n \times n}$ is the anti-identity matrix. Let $T_n[f]\in \mathbb{R}^{n \times n}$ be the Toeplitz matrix generated by $f$. Then 
		\[
		\{|C_n|^{-1} Y_nT_n[f]\}_n \sim_{\lambda}  \psi_{1}=\phi_1
		\]
		over the domain  $\tilde D$ with $D=[0,2\pi]$ and $p=-2\pi$ under the assumption that $\{C_n\}_n$ is a circulant matrix sequence such that
		\[
		\{C_n^{-1} T_n[f]\}_n \sim_{\sigma}  1.
		\]
		\end{theorem}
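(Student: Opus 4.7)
The plan is to reduce to a real symmetric matrix by a positive-definite congruence, and then apply the structural Theorem \ref{thm:main_hon-gen} in parallel with the proof of Theorem \ref{thm:main_hon-bis}. Define
\[
M_n = |C_n|^{-1/2}\, Y_n T_n[f]\, |C_n|^{-1/2},
\]
which is well defined (since $|C_n|$ is Hermitian positive definite by the Remark after Definition \ref{circ-modulus}) and real symmetric. The identity $|C_n|^{1/2} M_n |C_n|^{-1/2} = Y_n T_n[f]\,|C_n|^{-1}$ exhibits $M_n$ as similar to $|C_n|^{-1} Y_n T_n[f]$, so the two matrices share the same (real) spectrum and it suffices to establish $\{M_n\}_n \sim_\lambda \phi_1$.

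First I would translate the singular-value hypothesis to the absolute value preconditioner: since
\[
(|C_n|^{-1} T_n[f])^{*}\, |C_n|^{-1} T_n[f] = T_n[f]^{*} (C_n C_n^{*})^{-1} T_n[f] = (C_n^{-1} T_n[f])^{*}\, C_n^{-1} T_n[f],
\]
the singular values coincide, so $\{|C_n|^{-1} T_n[f]\}_n \sim_\sigma 1$. Next, mimicking the proof of Theorem \ref{thm:main_hon-bis}, I would decompose $M_n$ as a $2\times 2$ block matrix with $\nu = \lfloor n/2\rfloor$: writing $Y_n T_n[f]$ in the Hankel/Toeplitz block form and partitioning $|C_n|^{-1/2}$ into $\nu \times \nu$ blocks, the Hankel diagonal contributions produce a zero-distributed residue $\{E_n\}_n \sim_\sigma 0$ exactly as in Theorem \ref{thm:main_hon-bis}, while the surviving antidiagonal block sequence $\{A(n)\}_n$ asymptotically matches the half-sized symmetrization $|C_\nu|^{-1/2} Y_\nu T_\nu[f] |C_\nu|^{-1/2}$. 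The latter has singular values clustering at $1$ by the reduction of the hypothesis at half size. Theorem \ref{thm:main_hon-gen} applied with $g \equiv 1$ then yields $\{M_n\}_n \sim_\lambda \psi_1$ on $\tilde D$, and Corollary \ref{thm:main_hon-bis-corollary}'s rearrangement identifies $\psi_1$ with $\phi_1$.

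The delicate step is the $2\times 2$ block decomposition of the congruence: the circulant factor $|C_n|^{-1/2}$ does not split naturally into $\nu \times \nu$ blocks, and the cross-block interactions between its off-diagonal parts and both the Hankel diagonal and the Toeplitz antidiagonal of $Y_n T_n[f]$ must be shown to produce only a zero-distributed perturbation. This is an a.c.s.\ argument via Lemma \ref{lem:Corollary5.1} in the spirit of the GLT framework: it exploits that the hypothesis $\{C_n^{-1} T_n[f]\}_n \sim_\sigma 1$ forces $f$ to be sparsely vanishing and keeps $|C_n|^{-1/2}$ asymptotically controlled on the relevant subspaces, so that the off-diagonal contributions of the circulant factor vanish in the distributional sense.
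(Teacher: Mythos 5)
Your opening moves are fine: the reduction to $M_n=|C_n|^{-1/2}Y_nT_n[f]\,|C_n|^{-1/2}$ by similarity, and the observation that $|C_n|^{-1}T_n[f]$ and $C_n^{-1}T_n[f]$ share singular values, are both correct. The gap is in the central step, which you acknowledge as ``delicate'' but never actually carry out, and which is in fact the whole difficulty. Writing $|C_n|^{-1/2}=\left[\begin{smallmatrix} P & Q \\ Q^* & S\end{smallmatrix}\right]$ and $Y_nT_n[f]=\left[\begin{smallmatrix} H_1 & K \\ K & H_2\end{smallmatrix}\right]$ with $K=Y_\nu T_\nu[f]$ and $H_1,H_2$ the Hankel blocks, the $(1,2)$ block of $M_n$ is $PH_1Q+QKQ+PKS+QH_2S$: it involves the off-diagonal block $Q$ of the circulant square root in an essential way (the term $QKQ$ is not a perturbation, since $Q$ is not zero-distributed), and it is not asymptotically the half-sized matrix $|C_\nu|^{-1/2}Y_\nu T_\nu[f]\,|C_\nu|^{-1/2}$; no argument is given linking the blocks of the size-$n$ circulant $|C_n|^{-1/2}$ to a size-$\nu$ preconditioner, and none is obvious. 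Moreover, when $f$ has zeros the norm of $|C_n|^{-1/2}$ is unbounded in $n$, so the claim that the Hankel residues stay zero-distributed after the congruence does not follow from Lemma \ref{lem:Corollary5.1}: in the a.c.s.\ splitting $E_n=R_{n,m}+N_{n,m}$ the low-rank part survives conjugation, but $\|PN_{n,m}Q\|\le\|P\|\,\|Q\|\,\omega(m)$ is useless without a norm bound. Finally, even granting the decomposition, you would still need $\{A(n)\}_n\sim_\sigma 1$ for the surviving block, which is essentially equivalent in difficulty to the theorem itself; asserting it ``by the reduction of the hypothesis at half size'' is circular. So as it stands the proposal does not prove the statement.

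For comparison, the paper sidesteps the block analysis of the preconditioned matrix entirely. It uses three ingredients: (i) the Pestana--Wathen result that, under $\{C_n^{-1}T_n[f]\}_n\sim_\sigma 1$, the eigenvalues of $|C_n|^{-1}Y_nT_n[f]$ (equivalently of $M_n$) are clustered at $\{-1,1\}$; (ii) Sylvester's law of inertia, which says the congruence by $|C_n|^{-1/2}$ preserves the inertia of $Y_nT_n[f]$; and (iii) the inertia balance $|n^{+}(Y_nT_n[f])-n^{-}(Y_nT_n[f])|=o(n)$ from Theorem \ref{thm:main_hon}, applicable because the hypothesis forces $f$ to be sparsely vanishing. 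Clustering at $\pm1$ together with balanced inertia gives $n/2+o(n)$ eigenvalues near $1$ and $n/2+o(n)$ near $-1$, which is exactly $\{|C_n|^{-1}Y_nT_n[f]\}_n\sim_\lambda\psi_1=\phi_1$. If you want to rescue your route, you would have to supply the missing distributional analysis of the congruence, which is substantially harder than the inertia argument; otherwise the clustering-plus-inertia strategy is the economical proof.
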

	\begin{proof}
		Because $|C_n|$ is positive definite as observed in the remark after Definition \ref{circ-modulus}, the matrices 
		\[
		|C_n|^{-1} Y_nT_n[f]
		\]
		and 
		\[
		|C_n|^{-1/2} Y_nT_n[f]|C_n|^{-1/2}
		\]
		are well defined and similar. They share the same eigenvalues clustered around $\{-1,1\}$ by \cite{doi:10.1137/140974213}, under the assumption that $\{C_n^{-1} T_n[f]\}_n$ is clustered around $1$ in the singular value sense. Also, by the Sylvester inertia law, the matrices 
		\[
		|C_n|^{-1/2} Y_nT_n[f]|C_n|^{-1/2} \quad \text{and} \quad Y_nT_n[f]
		\]
		have exactly the same inertia, namely the same number of positive, negative, and zero eigenvalues.
		However, by Theorem \ref{thm:main_hon}, we know that the matrix $Y_nT_n[f]$ has $n/2 +o(n)$ positive eigenvalues, $n/2 +o(n)$ negative eigenvalues, and $o(n)$ zero eigenvalues for large enough $n$.
		Therefore, by combining the above statements, we deduce that the matrix $|C_n|^{-1} Y_nT_n[f]$ possesses $n/2 +o(n)$  eigenvalues clustered around $1$ and $n/2 +o(n)$ eigenvalues clustered around $-1$.
		
		A simple check shows that the latter statement is equivalent to writing 
		\[
		\{|C_n|^{-1} Y_nT_n[f]\}_n \sim_{\lambda}  \psi_{1}=\phi_1
		\]
		over the domain  $\tilde D$ with $D=[0,2\pi]$ and $p=-2\pi$.
	\end{proof}
	
	We now complement the previous theorem with a short discussion regarding the hypothesis $\{C_n^{-1} T_n[f]\}_n \sim_{\sigma}  1$. Going back to the analysis in \cite{EstaSerra,koro-general}, we have the following picture:
	\begin{description}
		\item[A)] when $C_n$ is the Strang preconditioner for $T_n[f]$ and $X^+$ denotes the pseudo-inverse of $X$., the key assumption $\{C_n^{+} T_n[f]\}_n \sim_{\sigma}  1$ holds if $f$ is sparsely vanishing and belongs to the Dini-Lipschitz class (see for example \cite[Proposition 2.1, item 2]{EstaSerra}) which is a proper subset of the continuous $2\pi$-periodic functions;
		\item[B)] when $C_n$ is the Frobenius optimal preconditioner for $T_n[f]$, the key assumption $\{C_n^{+} T_n[f]\}_n \sim_{\sigma}  1$ holds if $f$ is sparsely vanishing and simply Lebesgue integrable (such a general result was proven quite elegantly by combining the Korovkin theory \cite{koro-general} and the GLT analysis \cite{MR3674485});
\item[C)]  By combining item {\bf A)} and item  {\bf B)}, we can update Theorem \ref{thm:main_hon-bis-corollary2}, by including the case where $C_n$ is not necessarily invertible. It is enough to replace $C_n^{-1}$ by $C_n^{+}$, taking into account that the assumption of $f$ sparsely vanishing will imply the presence of at most $o(n)$ zero eigenvalues both in the matrix $C_n$ and in the preconditioned matrix $C_n^{+} T_n[f]$.
	\end{description}
	
	The above statements cover the range of applicability of the preconditioned MINRES technique described in \cite{doi:10.1137/140974213}. Regarding the analysis wherein, it is worth observing that the matrix $\tilde C_n$ in \cite[Equation (3.4), page 276]{doi:10.1137/140974213} is not involutory as claimed in the paper. In fact, it is simply unitary: indeed its eigenvalues have unit modulus but in general they are not real. Hence, it is orthogonal when $C_n$ is real.
	
	\section{Numerical experiments}
	
	This section is divided into two subsections. In Subsection~\ref{num1}, we numerically show that the statements of Theorem~\ref{thm:main_hon-bis} and Corollary~\ref{thm:main_hon-bis-corollary} are true in the cases of both trigonometric polynomials and more generic functions in $L^1([-\pi,\pi])$. In Subsection~\ref{num2}, we illustrate the predicted behaviour of the eigenvalues of the preconditioned matrix sequences of Theorem~\ref{thm:main_hon-bis-corollary2} for different choices of generating functions and circulant preconditioners.
	
	\subsection{Numerical experiments on the spectral distribution of $\{Y_nT_n[f]\}_n$}\label{num1}
	
	In order to numerically support Theorem \ref{thm:main_hon-bis}, we show that for large enough $n$ the eigenvalues of $Y_nT_n[f]$ are approximately equal to the samples of $\psi_{|f|}$ over a uniform grid in $[-2\pi,2\pi]$, with the possible exception of a small number of outliers. We also remark that the function  $\phi_{|f|}$ of Corollary \ref{thm:main_hon-bis-corollary} has the same property, due to the rearrangement reason.
	
	We highlight the fact that the matrix $Y_nT_n[f]$ is symmetric for any $n$, so the quantities $\lambda_j(Y_nT_n[f])$ are real for $j=1,\dots,n$. In particular, we order the eigenvalues of $Y_nT_n[f]$ according to the evaluation of $\psi_{|f|}$ (respectively $\phi_{|f|}$) on the following uniform grid in $[-2\pi,2\pi]$:
	\begin{equation}\label{eq:uniform_grid}
		\theta_{j,n} = -2\pi+j\frac{4\pi}{n}, \qquad j = 1, \dots, n.
	\end{equation}
	
	Thus, in our experiments, we first compute the quantities $\psi_{|f|}(\theta_{j,n})$ (respectively $\phi_{|f|}(\theta_{j,n})$) for a fixed $n$ and then compare them with the properly sorted eigenvalues $\lambda_j(Y_nT_n[f])$, $j=1,\dots,n$.
		
	In Example~\ref{ex_poly_2_6__2_6_300}, we give numerical evidence of the fact that $\lambda_j(Y_nT_n[f])$ and $\psi_{|f|}(\theta_{j,n})$ are approximately equal for a real-valued, even trigonometric polynomial. In Example~\ref{ex_poly_4_2_2_9__4_1_300}, considering a trigonometric polynomial, we compare the quantities $\lambda_j(Y_nT_n[f])$ with both $\psi_{|f|}(\theta_{j,n})$ and $\phi_{|f|}(\theta_{j,n})$, and observe that they are approximately equal with the exception of $3$ outliers. In Example~\ref{ex_theta2_200}, we give numerical evidence of Theorem~\ref{thm:main_hon-bis} for a continuous function in $L^1([-\pi,\pi])$ and in Example~\ref{ex_pwc_80} we do the same for a discontinuous piecewise constant function in $L^1([-\pi,\pi])$.
	
	\begin{example}\label{ex_poly_2_6__2_6_300}
		We consider the real-valued, even trigonometric polynomial $f:[-\pi,\pi]\mapsto \mathbb{R}$ defined by
		\[
		f(\theta) = 2-12\cos(\theta).
		\]
		
		The $n$-by-$n$ Toeplitz matrix generated by $f$ is
		\[ 
		T_n[f] = \left[
		\begin{array}{cccc}
		2&-6&&\\
		-6&\ddots&\ddots&\\
		&\ddots&\ddots&-6\\
		&&-6&2 
		\end{array} \right].
		\]
		Notice that $T_n[f]$ is banded and symmetric, as we can see from the preliminaries on Toeplitz matrices in Section~\ref{section:preliminaries}.
		
		The multiplication by $Y_n$ produces the following matrix:
		\[ 
		Y_nT_n[f] = \left[
		\begin{array}{cccc}
		&&-6&2\\
		&\iddots&\iddots&-6\\
		-6&\iddots&\iddots&\\
		2&-6&& 
		\end{array} \right].
		\]
		The plot in Figure \ref{fig:ex_poly_2_6__2_6_300} shows that the eigenvalues of $Y_nT_n[f]$, properly sorted, are approximately equal to the samples of $\psi_{|f|}$ over $\theta_{j,n}$ for all $j = 1,\dots,n$. The plot is made for $n = 300$. This result is expected from the statement of Theorem~\ref{thm:main_hon-bis}. In this case, there are no outliers.
		
		\begin{figure}
			\centering
			\includegraphics[width=0.90\textwidth]{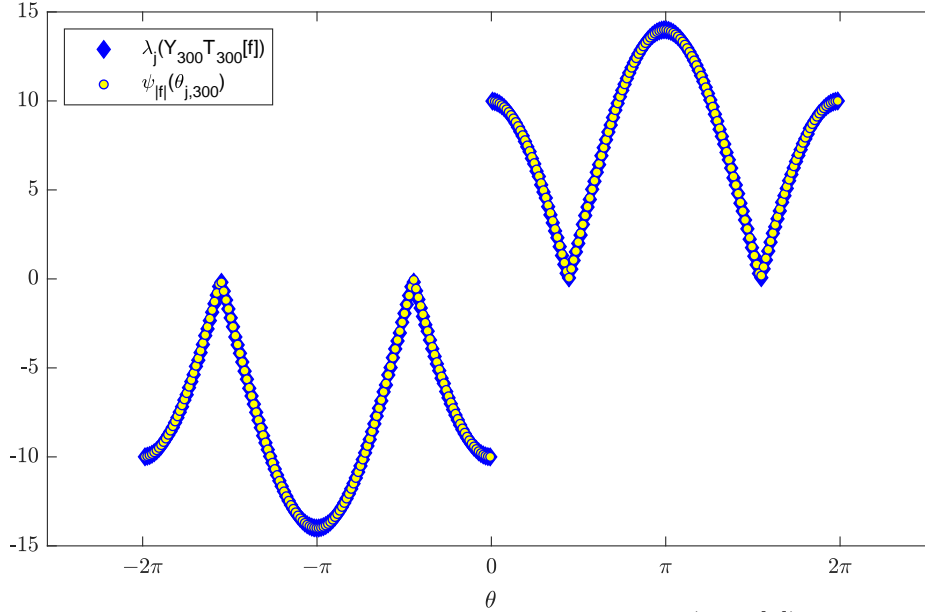}
			\vspace{-10pt}
			\caption{Example~\ref{ex_poly_2_6__2_6_300}, a comparison between the eigenvalues $\lambda_j(Y_nT_n[f])$ and the samples $\psi_{|f|}(\theta_{j,n})$, for $f(\theta) = 2-12\cos(\theta)$ and $n = 300$.}
			\label{fig:ex_poly_2_6__2_6_300}
		\end{figure}
	\end{example}
	
	\begin{example}\label{ex_poly_4_2_2_9__4_1_300}
		We consider the trigonometric polynomial $f:[-\pi,\pi]\mapsto \mathbb{C}$
		\[
		f(\theta) =4+2e^{-\mathbf{i}\theta}+2e^{-2\mathbf{i}\theta}+9e^{-3\mathbf{i}\theta}+e^{\mathbf{i}\theta}.
		\]
		
		The function $f$ generates a real, banded Toeplitz matrix $T_n[f]$. Differently from Example~\ref{ex_poly_2_6__2_6_300}, the matrix $T_n[f]$ in this case is not symmetric. However, the premultiplication by $Y_n$ produces the symmetric matrix $Y_nT_n[f]$ with real eigenvalues $\lambda_j(Y_nT_n[f])$.
		
		In this example, we compare the eigenvalues of $Y_nT_n[f]$ with the samples of $\psi_{|f|}$ (Figure \ref{fig:ex_poly_4_2_2_9__4_1_300}) and  $\phi_{|f|}$ (Figure \ref{fig:ex_poly_phi_4_2_2_9__4_1_300}) respectively. In both figures, we observe that the spectrum of $Y_nT_n[f]$ is well approximated by the evaluations of $\psi_{|f|}$ and  $\phi_{|f|}$ respectively, except for the presence of 3 outliers.
		
		The presence of such eigenvalues, which are not approximated by the sampling of $\psi_{|f|}$ and  $\phi_{|f|}$, is in line with the behaviour predicted by Theorem~\ref{thm:main_hon-bis} and Corollary~\ref{thm:main_hon-bis-corollary}. In fact, this agrees well with the concept of spectral distribution formalized in Definition~\ref{def:spectral_distribution}.
		\begin{figure}
			\centering
			\includegraphics[width=0.90\textwidth]{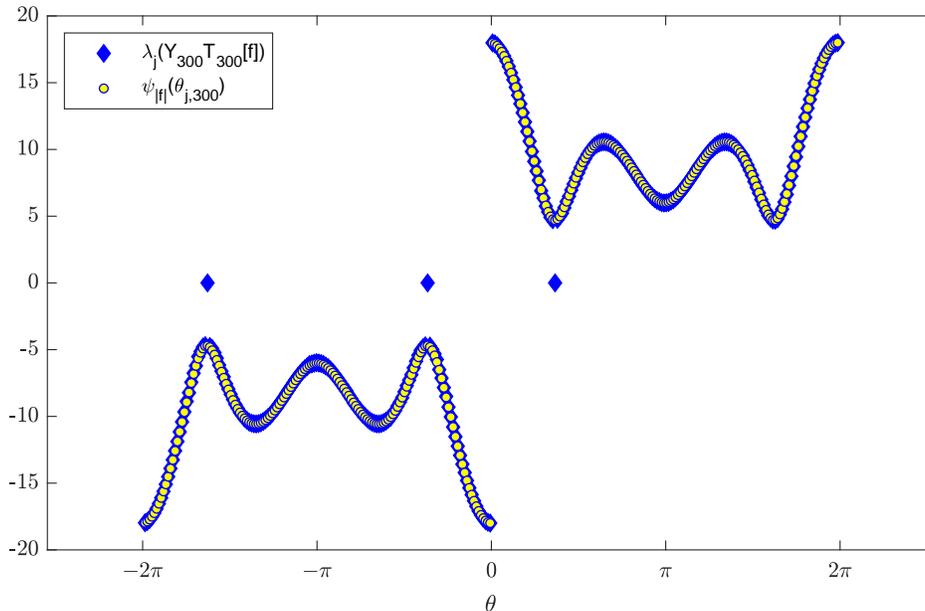}
			\vspace{-10pt}
			\caption{Example~\ref{ex_poly_4_2_2_9__4_1_300}, a comparison between the eigenvalues $\lambda_j(Y_nT_n[f])$ and the samples $\psi_{|f|}(\theta_{j,n})$, for $f(\theta) = 4+2e^{-\mathbf{i}\theta}+2e^{-2\mathbf{i}\theta}+9e^{-3\mathbf{i}\theta}+e^{\mathbf{i}\theta}$ and $n = 300$.}
			\label{fig:ex_poly_4_2_2_9__4_1_300}
		\end{figure}
		\begin{figure}
			\centering
			\includegraphics[width=0.90\textwidth]{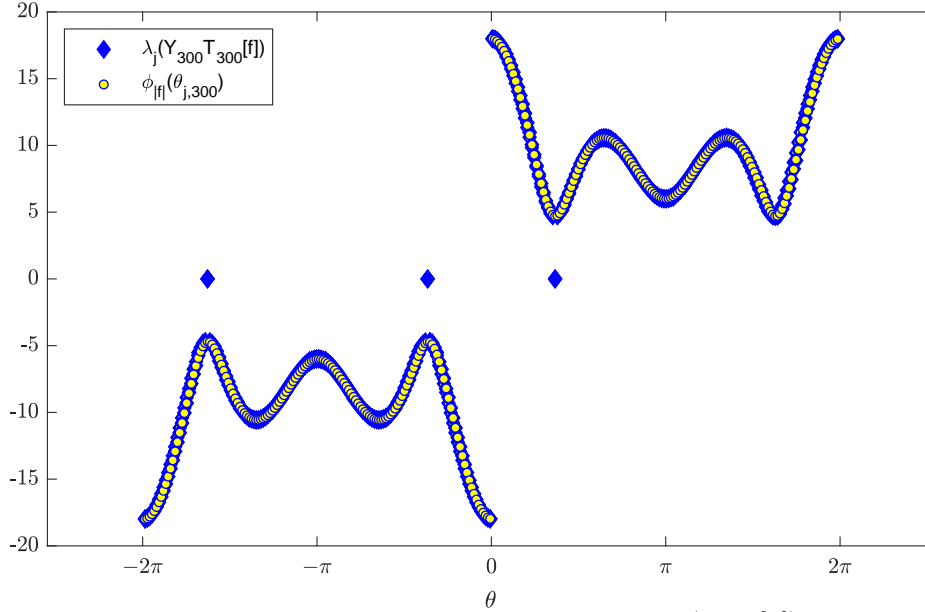}
			\vspace{-10pt}
			\caption{Example~\ref{ex_poly_4_2_2_9__4_1_300}, a comparison between the eigenvalues $\lambda_j(Y_nT_n[f])$ and the samples $\phi_{|f|}(\theta_{j,n})$, for $f(\theta) = 4+2e^{-\mathbf{i}\theta}+2e^{-2\mathbf{i}\theta}+9e^{-3\mathbf{i}\theta}+e^{\mathbf{i}\theta}$, and $n = 300$.}
			\label{fig:ex_poly_phi_4_2_2_9__4_1_300}
		\end{figure}
	\end{example}
	
	\begin{example}\label{ex_theta2_200}
		Let us define the function $f:[-\pi,\pi] \rightarrow \mathbb{R}$ by
		\[
		f(\theta) = \theta^2,
		\]
		periodically extended to the real line.
		
		The function $f$ is not a trigonometric polynomial, and consequently the matrices $T_n[f]$ are dense. In fact, the Fourier coefficients of $f$ are given by the formula
		\[
		\left\{
		\begin{array}{ll}
		a_0 = \frac{\pi^2}{3}, &  \\
		a_{k}=(-1)^k\frac{2}{k^2}, & k=\pm1,\pm2,\dots.
		\end{array}
		\right. 
		\]
		This expression can be derived by a direct computation of the quantities
		\[
		a_{k} = \frac{1}{\pi} \int_{0} ^{\pi} \theta^2 \cos(-\mathbf{i} k \theta) \,d\theta.
		\]
		
		In this example, we set $n$ equal to 200. We want to evaluate $\psi_{|f|}$ on the points of the grid $\theta_{j,n}$. Recalling that $f$ is defined on $[-\pi,\pi]$ and periodically extended to the real line, we can write an explicit formula for $f$ in $[0,2\pi]$:
		\[
		\left\{
		\begin{array}{ll}
		\theta^2, & \theta\in [0,\pi], \\
		(\theta-2\pi)^2, & \theta\in (\pi,2\pi].
		\end{array}
		\right. 
		\]	
		
		As a consequence of the definition of $f$, we have that the associated function $\psi_{|f|}$ is piecewise defined in the following 4 subintervals
		\[
		\psi_{|f|}(\theta_{j,n}) = 
		\left\{
		\begin{array}{cl}
		-(\theta_{j,n}+2\pi)^2, & \forall j = 1,\dots,\frac{n}{4},  \\
		-(\theta_{j,n})^2, & \forall j = \frac{n}{4}+1,\dots,\frac{n}{2},  \\
		(\theta_{j,n})^2, & \forall j = \frac{n}{2}+1,\dots,\frac{3n}{4},  \\
		(\theta_{j,n}-2\pi)^2, & \forall j = \frac{3n}{4}+1,\dots,n.
		\end{array}
		\right. 
		\]	
		
		In Figure~\ref{fig:ex_theta2_200}, we numerically show that the quantities $\psi_{|f|}(\theta_{j,n})$ approximate the eigenvalues $\lambda_j(Y_nT_n[f])$ for all $j = 1,\dots,n$. This result is expected from Theorem \ref{thm:main_hon-bis}, which holds for generic functions in $L^1([-\pi,\pi])$ with real Fourier coefficients.
		\begin{figure}
			\centering
			\includegraphics[width=0.90\textwidth]{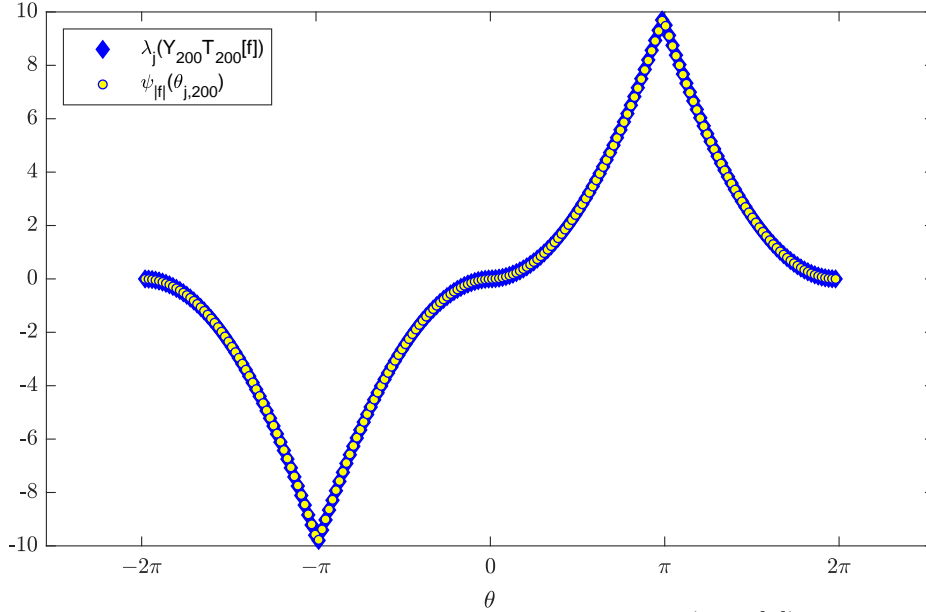}
			\vspace{-10pt}
			\caption{Example~\ref{ex_theta2_200}, a comparison between the eigenvalues $\lambda_j(Y_nT_n[f])$ and the samples $\psi_{|f|}(\theta_{j,n})$, for $f(\theta) = \theta^2$ and $n = 200$.}
			\label{fig:ex_theta2_200}
		\end{figure}
	\end{example}
	
	\begin{example}\label{ex_pwc_80}
		In the current example, we give numerical evidence of the distribution result of Theorem~\ref{thm:main_hon-bis} under the hypothesis that $f$ is a discontinuous function $f:[-\pi,\pi] \rightarrow \mathbb{R}$, piecewisely defined by the formula
		\[
		f(\theta)=\left\{
		\begin{array}{ll}
		5, & \theta\in [-\pi,-\pi/2), \\
		2, & \theta\in [-\pi/2,\pi/2), \\ 
		5, & \theta\in [\pi/2,\pi], \\
		\end{array}
		\right.
		\]
		and periodically extended to the real line. 
		
		We fix $n = 80$ and compute $\psi_{|f|}$ on the whole grid $\theta_{j,n}$ with a procedure similar to that in Example~\ref{ex_theta2_200}. In Figure~\ref{fig:ex_pwc_80} we show that the sampling $\psi_{|f|}(\theta_{j,n})$ is an approximation of the eigenvalues of the matrix $Y_nT_n[f]$ up to a constant number of outliers. 
		\begin{figure}
			\centering
			\includegraphics[width=0.90\textwidth]{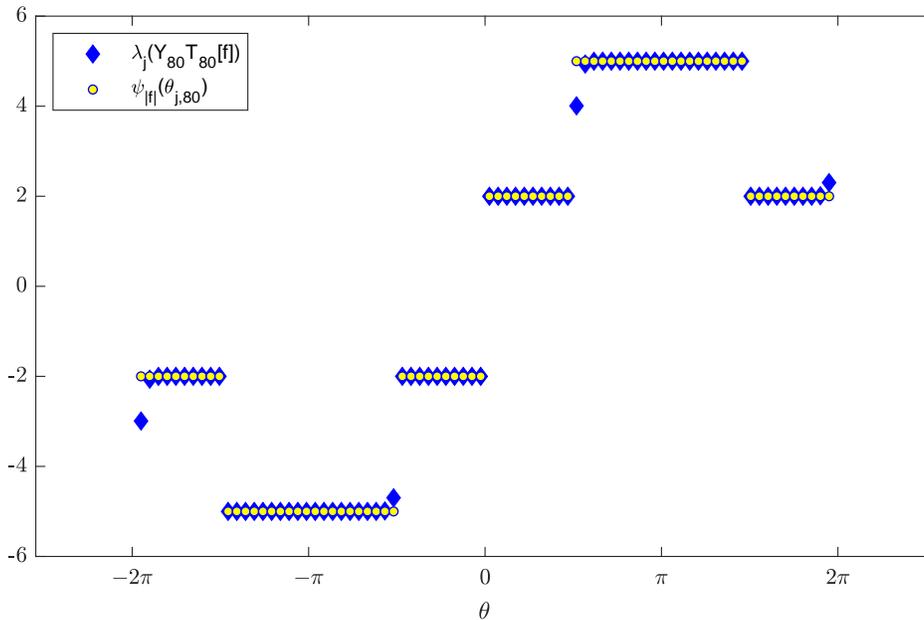}
			\vspace{-10pt}
			\caption{Example~\ref{ex_pwc_80}, comparison between the eigenvalues $\lambda_j(Y_nT_n[f])$ and the samples $\psi_{|f|}(\theta_{j,n})$, for the piecewise constant $f$ and $n = 80$.}
			\label{fig:ex_pwc_80}
		\end{figure}
	\end{example}

	\begin{example}\label{ex_block_Toeplitz}
	
	The last example of this subsection is the distribution result of the following matrix-valued function $f:[-\pi,\pi]\mapsto \mathbb{R}^{2 \times 2}$
	\[
	f(\theta)
	=
	\frac{1}{\sqrt{2}}
	\left[
	\begin{array}{cc}
	1   &  1\\
	1  & -1
	\end{array}
	\right]
	\left[
	\begin{array}{cc}
	10+2\cos{\theta} &  0\\
	0  & 2-\cos{\theta} 
	\end{array}
	\right]
	\frac{1}{\sqrt{2}}
	\left[
	\begin{array}{cc}
	1   &  1\\
	1  & -1
	\end{array}
	\right].
	\]
	
	Choosing $n = 200$, we compute $\psi_{|f|}$ on the uniform grid $\theta_{j,n}$ as before. Figure~\ref{fig:ex_block_Toeplitz} shows the sampling $\psi_{|f|}(\theta_{j,n})$ approximates the eigenvalues of the matrix $(Y_{n}\otimes I_s )T_{n,s}[f]$ well. We observe the four branches of eigenvalues $[-12,-8]\cup[-3,-1]\cup[1,3]\cup[8,12]$ as described by Theorem~\ref{thm:main_hon-ter}.
	
	\begin{figure}
	\centering
	\includegraphics[width=0.90\textwidth]{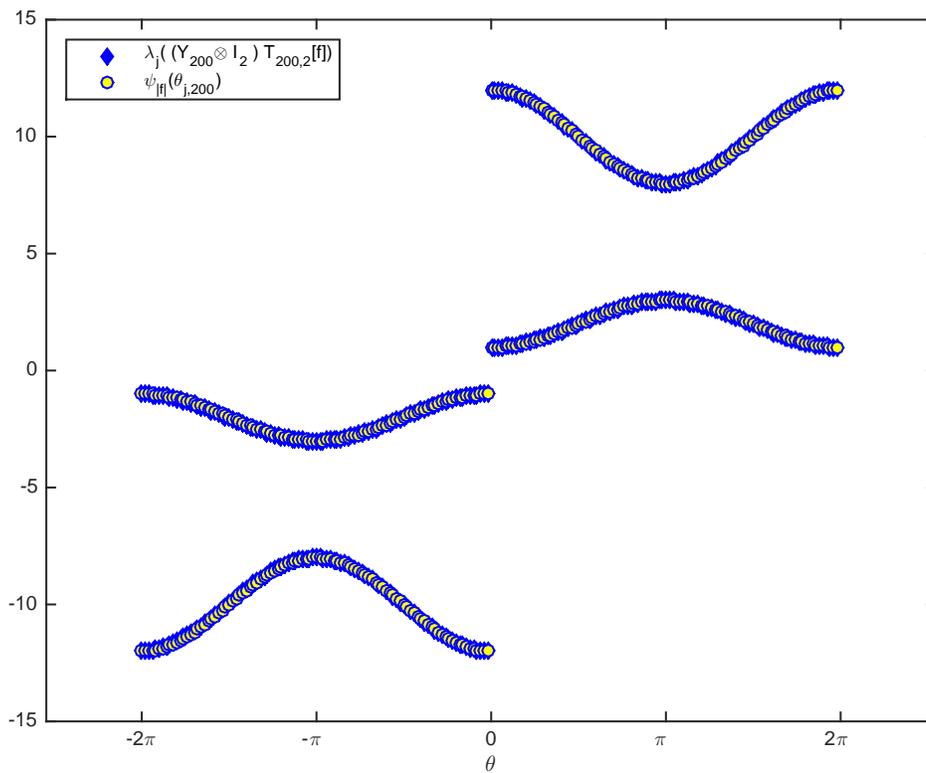}
	\vspace{-10pt}
	\caption{Example~\ref{ex_block_Toeplitz}, comparison between the eigenvalues $\lambda_j((Y_n \otimes I_s)T_{n,s}[f])$ and the samples $\psi_{|f|}(\theta_{j,n})$, for the matrix-valued $f$ and $(n,s) = (200,2)$.}
			\label{fig:ex_block_Toeplitz}
	\end{figure}
	
	\end{example}

	\subsection{Numerical experiments on preconditioned matrix sequences}\label{num2}
	
	This second subsection is dedicated to numerically illustrating the spectral behaviour of the preconditioned matrix sequence $\{|C_n|^{-1} Y_nT_n[f]\}_n$ as predicted in Theorem \ref{thm:main_hon-bis-corollary2}.
	
	Having proved that, under certain conditions, roughly half of the eigenvalues of $\{|C_n|^{-1} Y_nT_n[f]\}_n$ are clustered around $1$ and the other half around $-1$, we illustrate this spectral behaviour in several examples in the following.
	
	In particular, in Example \ref{ex_prec_poly} we focus on $f$ being a trigonometric polynomial. In Example~\ref{ex_prec_theta2}, we fix $f$ to be a quadratic function and in Example~\ref{ex_prec_pwc} we take $f$ as a discontinuous piecewise constant function.
	
	In the following examples, we first verify that the condition $\{C_n^{+} T_n[f]\}_n \sim_{\sigma}  1$ holds for each choice of generating function $f$ and the circulant preconditioner $C_n$. We prove this either using the discussion after Theorem \ref{thm:main_hon-bis-corollary2} (Examples~\ref{ex_prec_poly} and~\ref{ex_prec_theta2}) or numerically (Example~\ref{ex_prec_pwc}).
	
	Once that hypothesis is verified, we graphically show that the eigenvalues of $\{|C_n|^{-1} Y_nT_n[f]\}_n$ are distributed as the function $\psi_1$ over $[-2\pi,2\pi]$.
	
	In many cases, the greatest eigenvalue is an outlier and it becomes large very quickly. In order to make the figures more readable, we do not plot it.
	
	\begin{example}\label{ex_prec_poly}
		We consider the trigonometric polynomial
		\[
		f(\theta) =2-2e^{-\mathbf{i}\theta}-3e^{\mathbf{i}\theta}.
		\]
		
		Since $f$ is a nonzero polynomial, it is obviously sparsely vanishing and belongs to the Dini-Lipschitz class. Thus, we can use either the argument $\mathbf{A}$ or the argument $\mathbf{B}$ after Theorem \ref{thm:main_hon-bis-corollary2} to realize that $\{C_n^{+} T_n[f]\}_n \sim_{\sigma}  1$. We follow the argument $\mathbf{A}$ (the argument $\mathbf{B}$ is analogous), choosing as $C_n$ the Strang preconditioner for $T_n[f]$.
		
		 In Figure~\ref{fig:prec_poly}, we plot the eigenvalues of $|C_n|^{-1} Y_nT_n[f]$ for different values of $n$. For both $n = 500$ and $n = 1000$ we observe that the values $\lambda_j(|C_n|^{-1} Y_nT_n[f])$ are distributed as the function $\psi_{1}$, as predicted by Theorem \ref{thm:main_hon-bis-corollary2}. In fact, except for a constant number of outliers, half of the eigenvalues are equal to -1 and half of the eigenvalues are equal to 1.
		\begin{figure}
			\centering
			\includegraphics[width=0.45\textwidth]{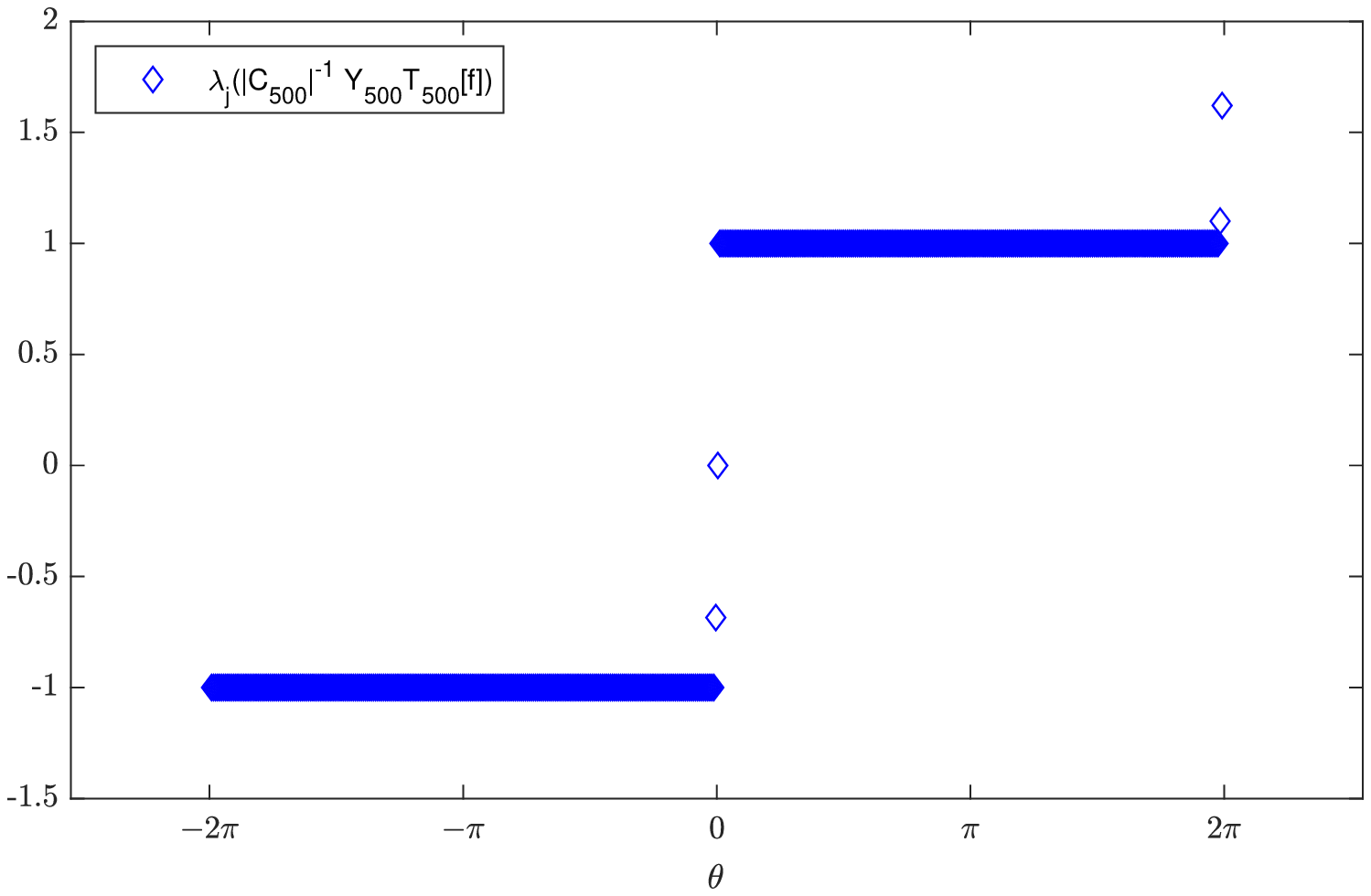}
			\includegraphics[width=0.45\textwidth]{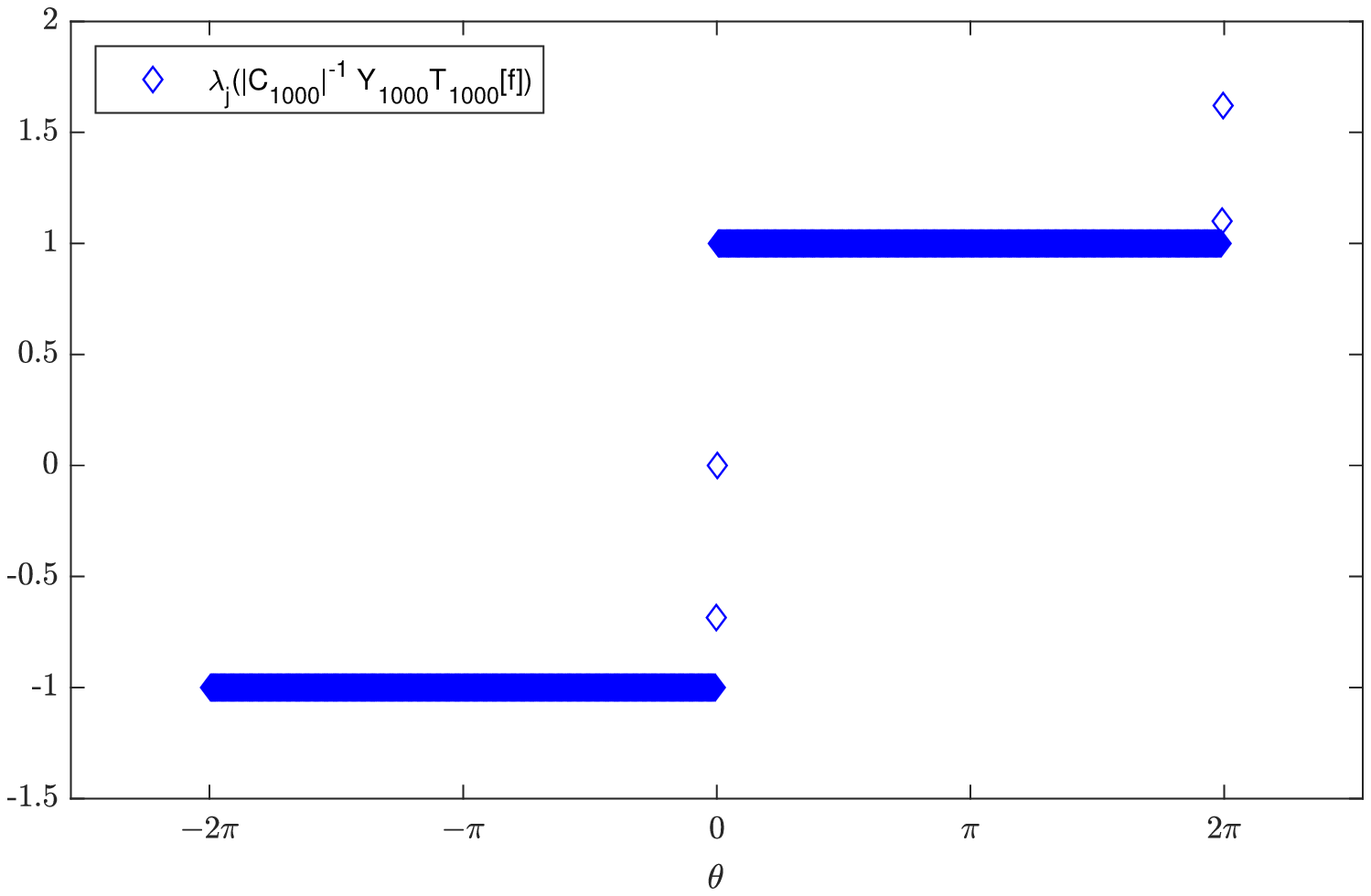}
			\caption{Example~\ref{ex_prec_poly}, the eigenvalues of $|C_n|^{-1} Y_nT_n[f]$, where $f(\theta) = 2-2e^{-\mathbf{i}\theta}-3e^{\mathbf{i}\theta}$, $C_n$ is the Strang preconditioner, and $n = 500, 1000$.}
			\label{fig:prec_poly}
		\end{figure}
	\end{example}
	
	\begin{example}\label{ex_prec_theta2}
		We consider the generating function
		\[
		f(\theta) = \theta^2.
		\]
		
		The remarks after Theorem \ref{thm:main_hon-bis-corollary2} assure us that, in this case, we can use both the Strang preconditioner and the Frobenius optimal preconditioner.
		
		For the current example, we show the results obtained from the two types of preconditioners, for different choices of $n$.
		
		In Figure~\ref{fig:prec_strang_theta2}, we plot the eigenvalues $\lambda_j(|C_n|^{-1} Y_nT_n[f])$, where $C_n$ is the Strang preconditioner for $n = 157, 200, 589, 1000$. For all tested $n$, the greatest eigenvalue $\lambda_n(|C_n|^{-1} Y_nT_n[f])$ is an outlier and becomes large quickly as $n$ increases. Consequently, this large outlier is not plotted for a better visualization of the values $\lambda_j(|C_n|^{-1} Y_nT_n[f])$ for $j = 1, \dots, n-1$.
		
		Notice that the spectrum of $|C_n|^{-1} Y_nT_n[f]$ is divided into two sets of almost the same cardinality: the first contains the eigenvalues equal to -1 and the second, instead, the eigenvalues equal to 1. Finally, the number of outliers that do not belong to the previous group is infinitesimal in the dimension $n$ of the matrix.
		
		In Figure~\ref{fig:prec_fro_theta2}, an analogous clustering of eigenvalues is shown using the Frobenius preconditioner for $n = 157, 200, 589, 1000$.  In this second experiment the Frobenius preconditioner gives us a worse result in terms of outliers. In fact, the number of outliers is significantly larger than that in the Strang preconditioner case. However, it is still infinitesimal with respect to $n$ as expected from the thesis of Theorem \ref{thm:main_hon-bis-corollary2}.
		\begin{figure}
			\centering
			\includegraphics[width=0.45\textwidth]{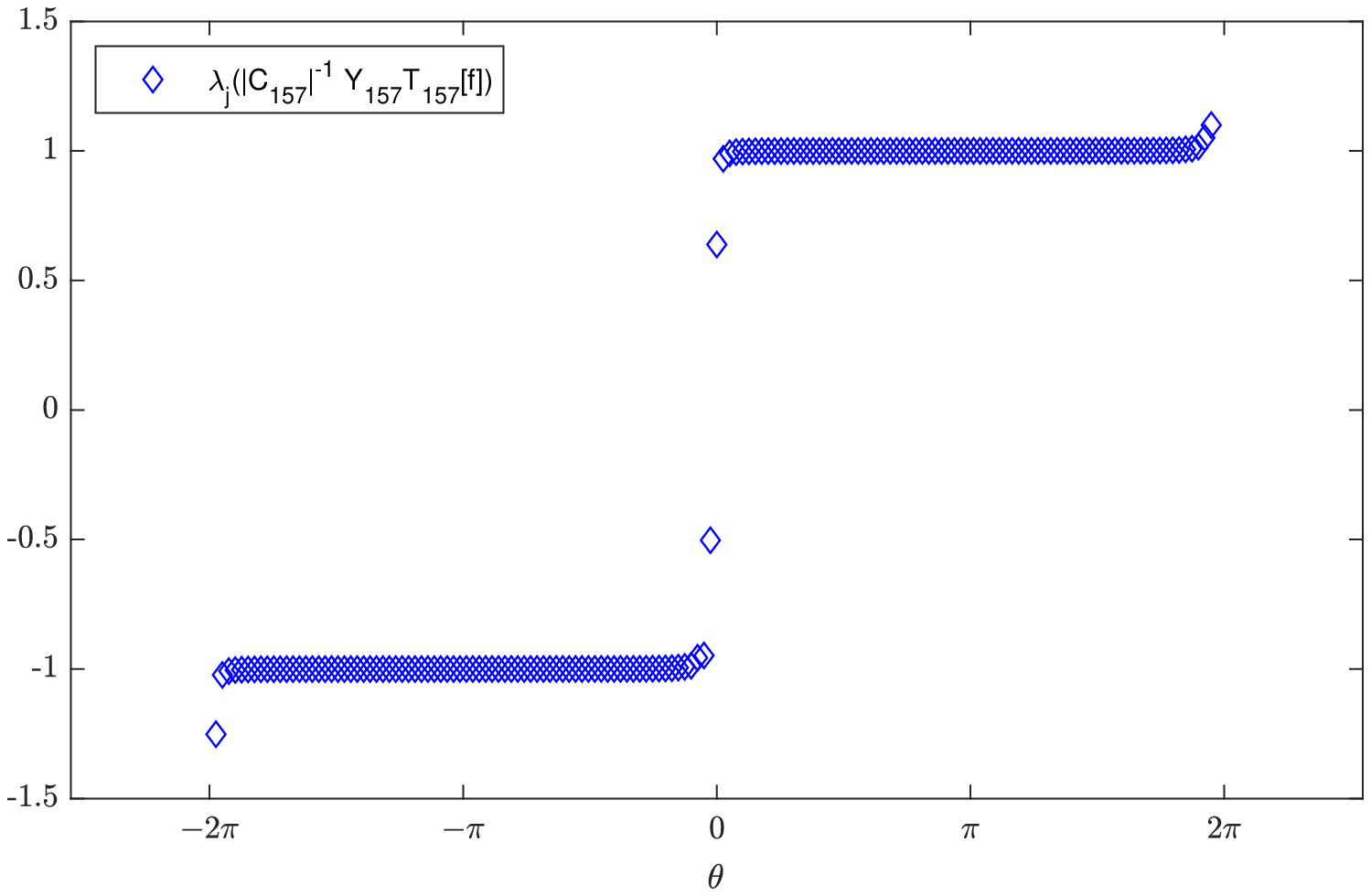}
			\includegraphics[width=0.45\textwidth]{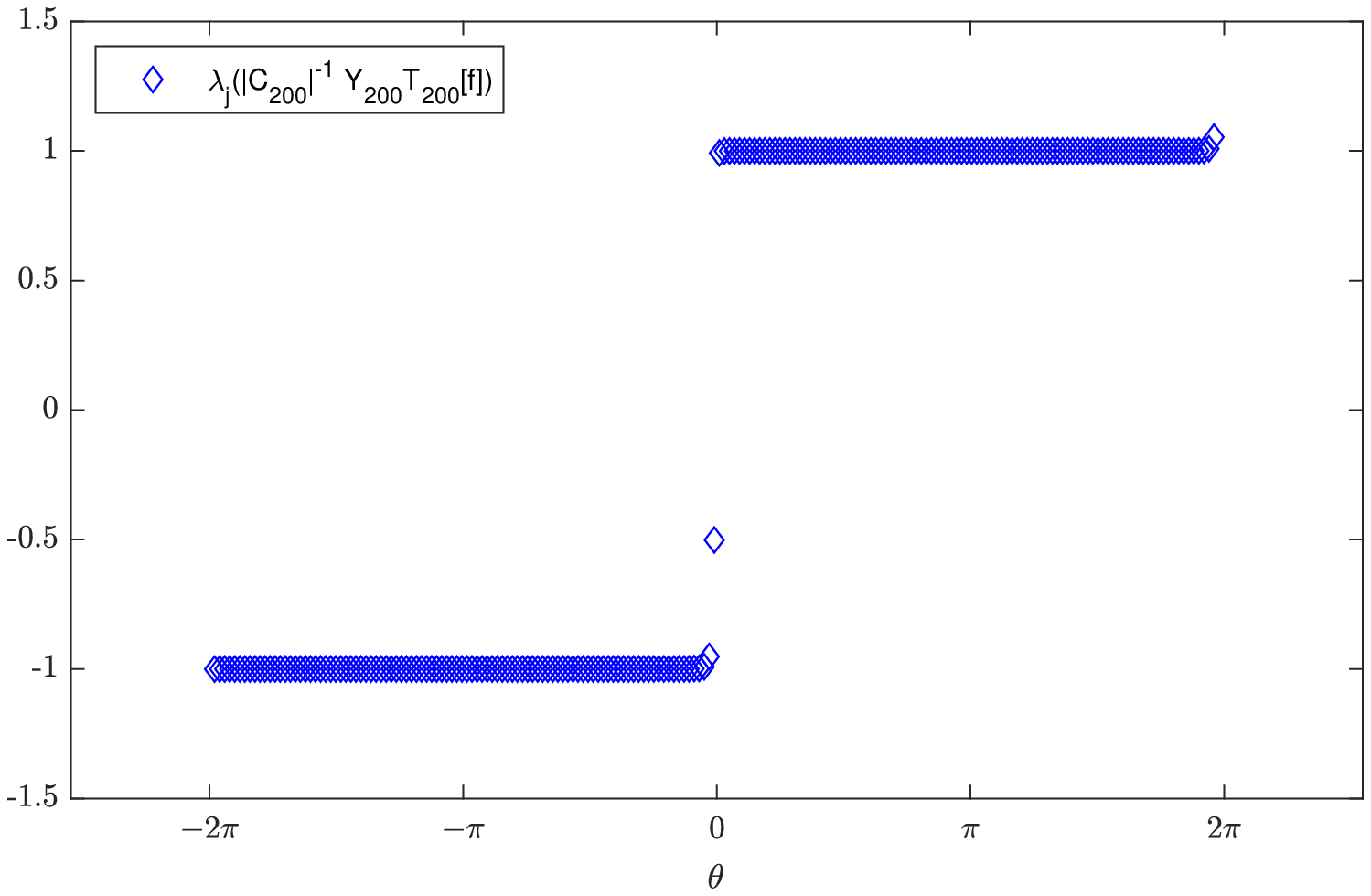}
			\includegraphics[width=0.45\textwidth]{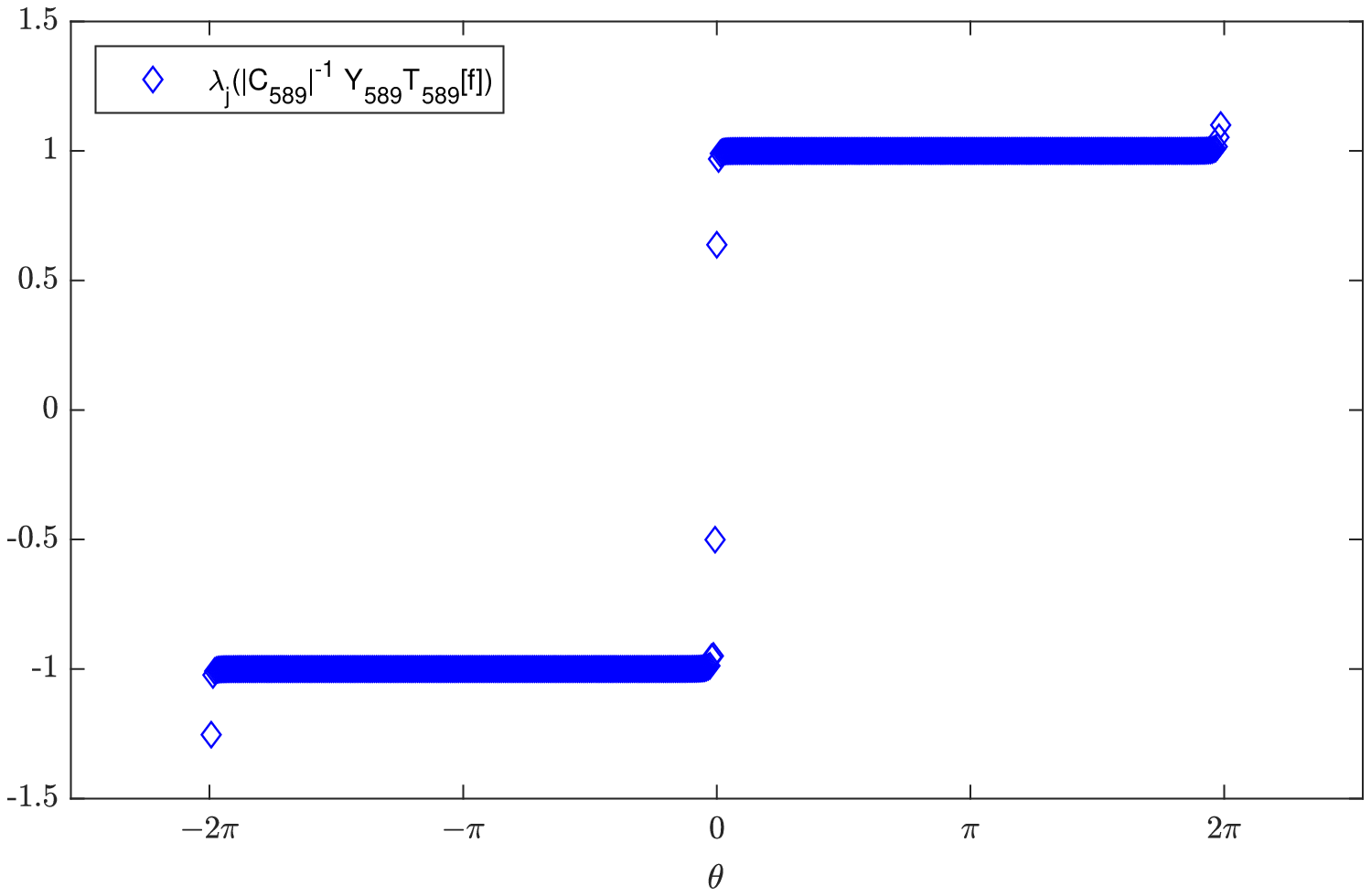}
			\includegraphics[width=0.45\textwidth]{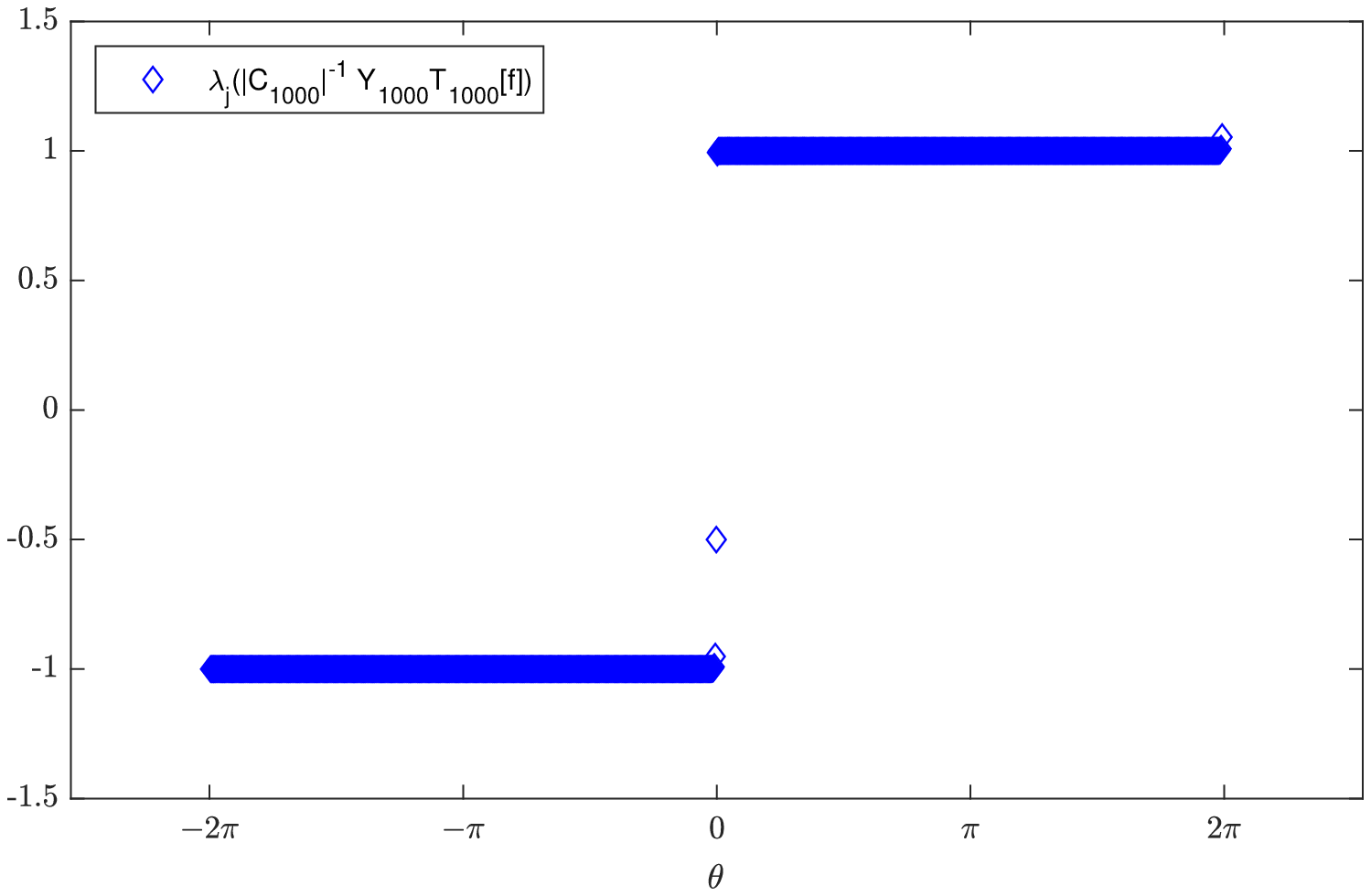}
			\caption{Example~\ref{ex_prec_theta2}, the eigenvalues of $|C_n|^{-1} Y_nT_n[f]$, where $f(\theta) = \theta^2$, $C_n$ is the Strang preconditioner, and $n = 157, 200, 589, 1000$. The greatest eigenvalue $\lambda_n(|C_n|^{-1} Y_nT_n[f])$ is not plotted.}
			\label{fig:prec_strang_theta2}
		\end{figure}
		\begin{figure}
			\centering
			\includegraphics[width=0.45\textwidth]{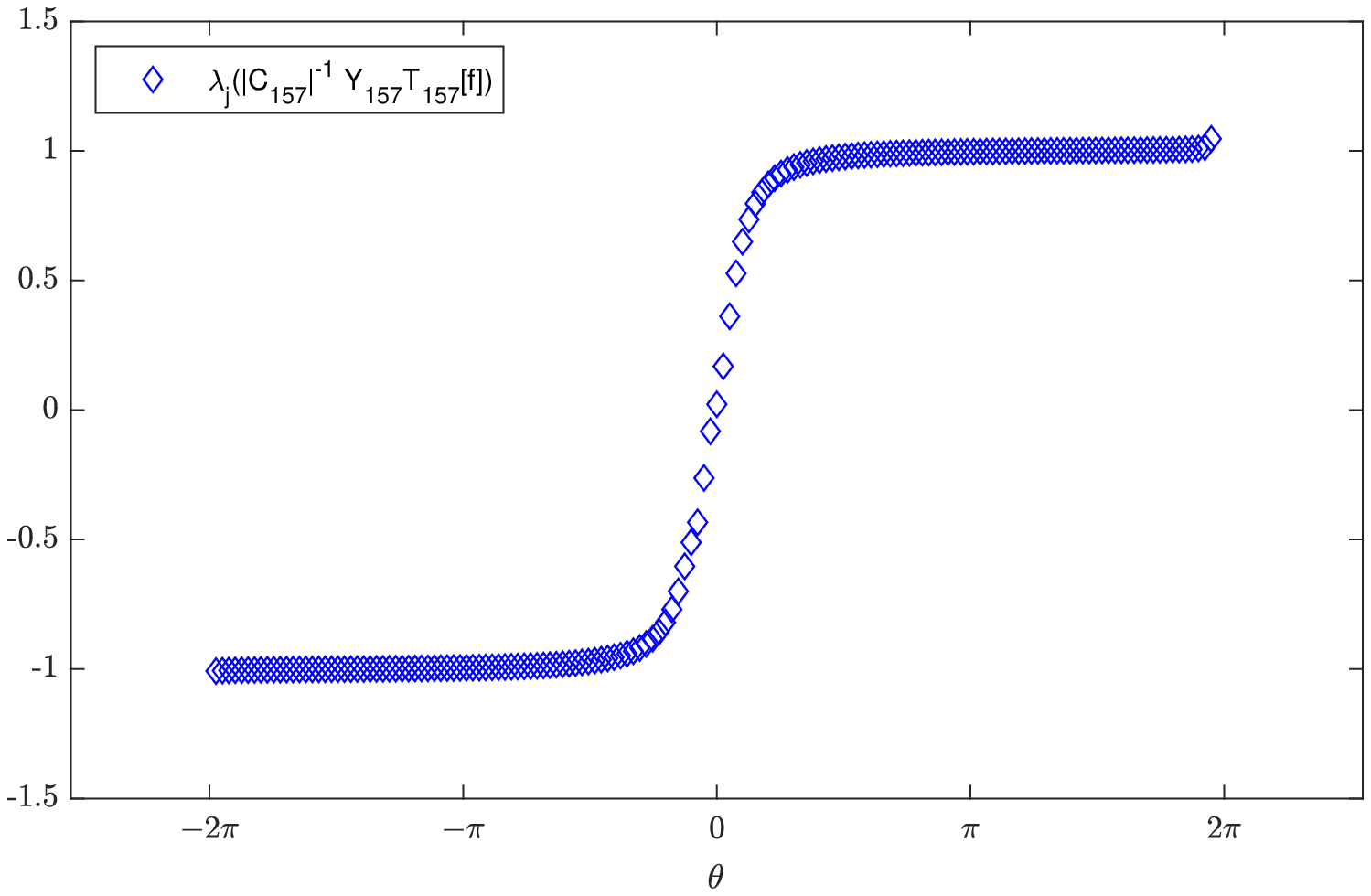}
			\includegraphics[width=0.45\textwidth]{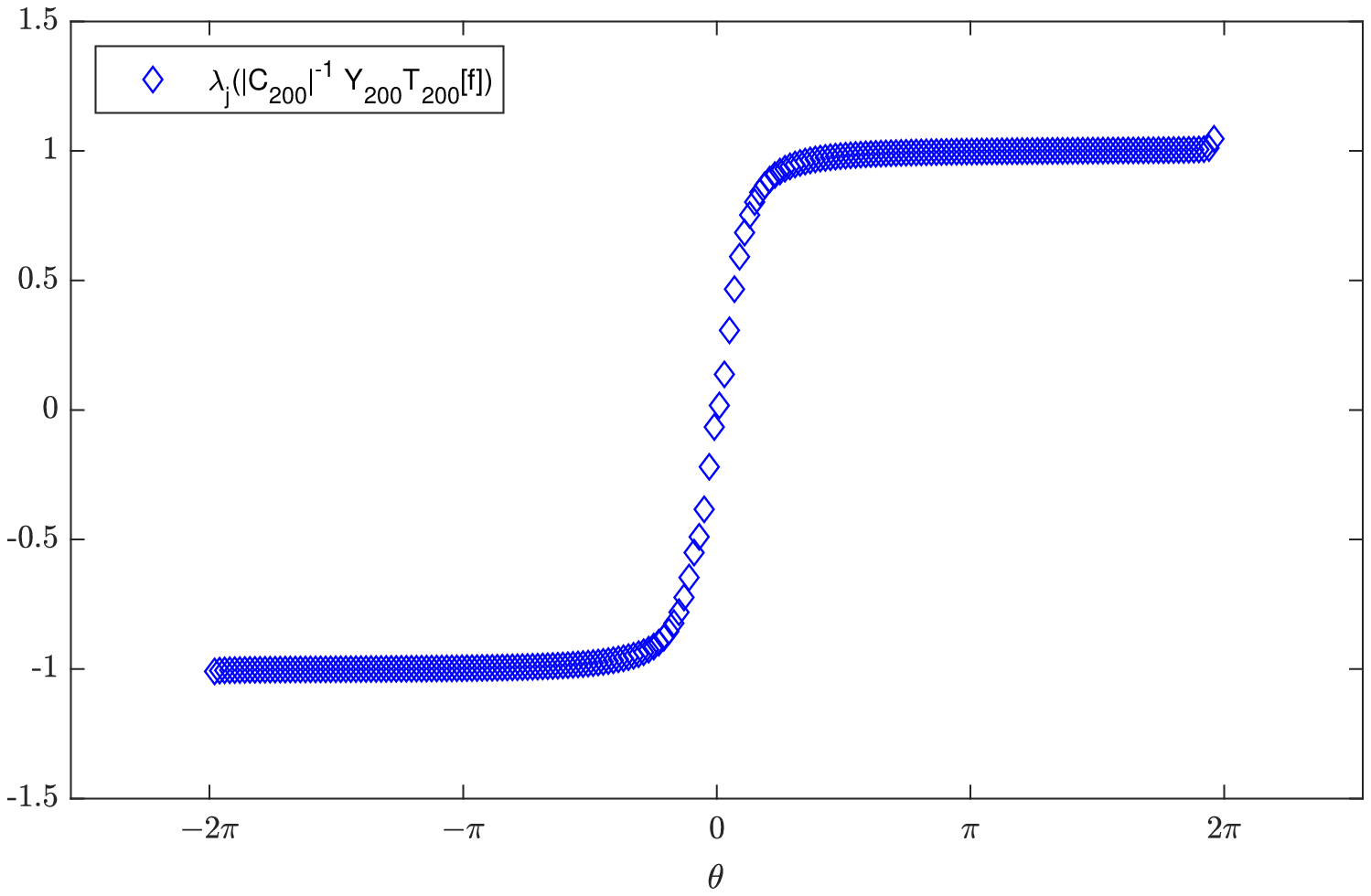}
			\includegraphics[width=0.45\textwidth]{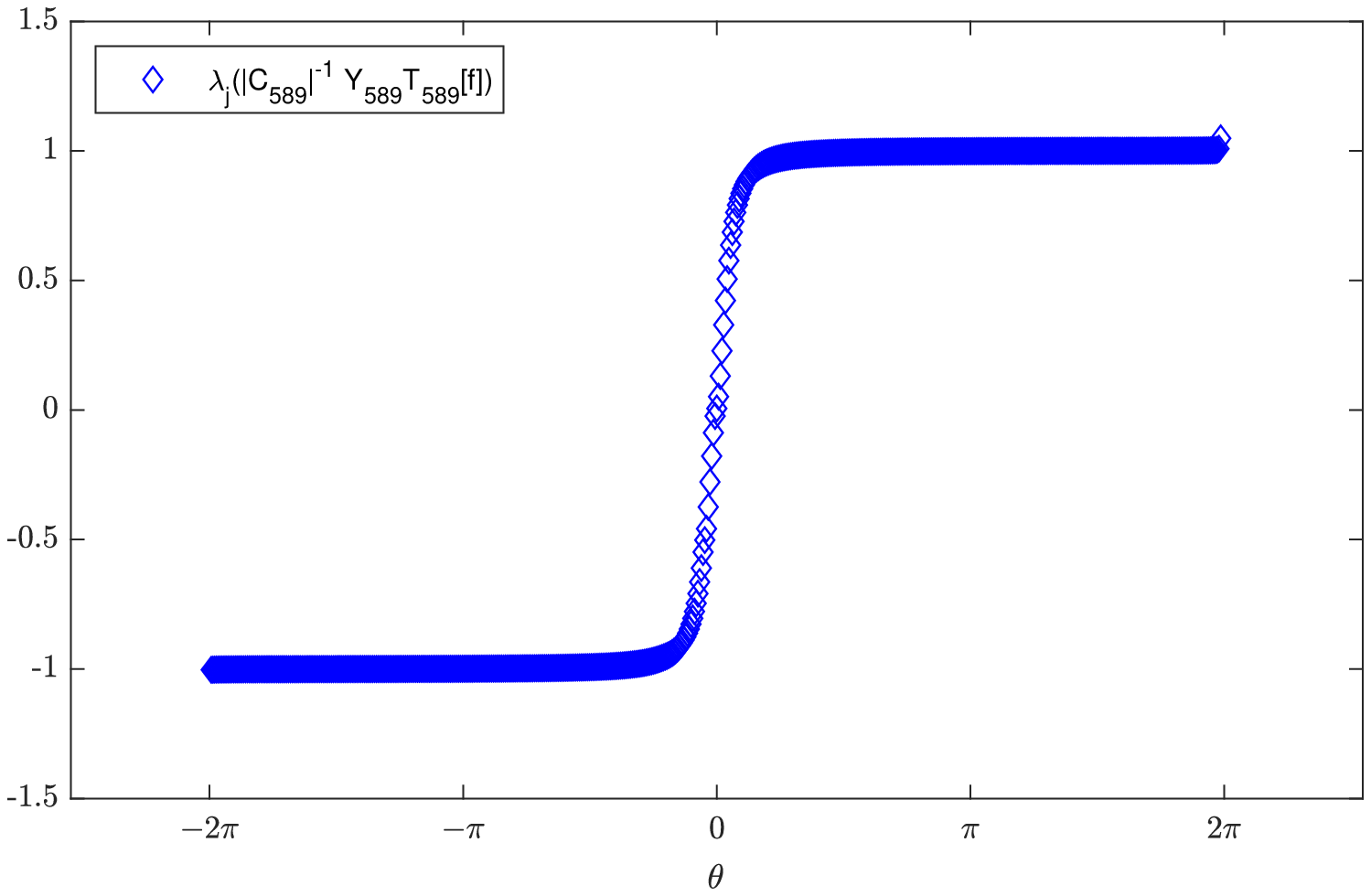}
			\includegraphics[width=0.45\textwidth]{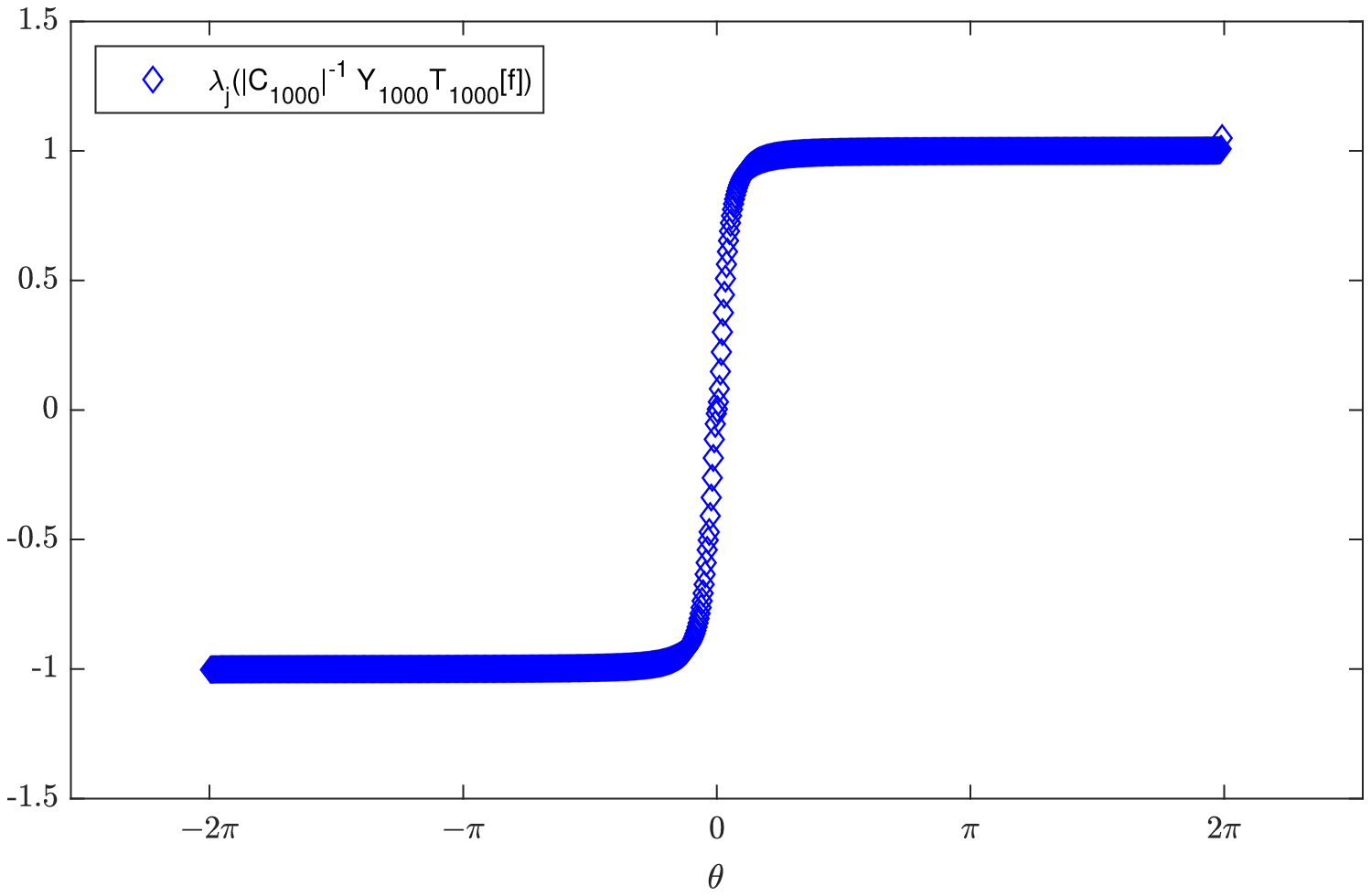}
			\vspace{-10pt}
			\caption{Example~\ref{ex_prec_theta2}, the eigenvalues of $|C_n|^{-1} Y_nT_n[f]$, where  $f(\theta) = \theta^2$, $C_n$ is the Frobenius optimal preconditioner, and $n = 157, 200, 589, 1000$. The greatest eigenvalue $\lambda_n(|C_n|^{-1} Y_nT_n[f])$ is not plotted.}
			\label{fig:prec_fro_theta2}
		\end{figure}
	\end{example}
	
	\begin{example}\label{ex_prec_pwc}
		In this last example, we consider the discontinuous function
		\[
		f(\theta)=\left\{
		\begin{array}{ll}
		5, & \theta\in [-\pi,-\pi/2), \\
		2, & \theta\in [-\pi/2,\pi/2), \\ 
		5, & \theta\in [\pi/2,\pi]. \\
		\end{array}
		\right. 
		\]
		
		In this case, instead of using the argument $\mathbf{B}$,  in Figure~\ref{fig:ex_toeplitz} we show graphically that the property 
		\[
		\{C_n^{+} T_n[f]\}_n \sim_{\sigma}  1,
		\]
		is true for the Strang preconditioner. 
		
		In Figure~\ref{fig:ex_prec_pwc}, we plot the eigenvalues $\lambda_j(|C_n|^{-1} Y_nT_n[f])$, $j = 1,\dots,n-1$, for $n = 500, 1000$. In both cases, the eigenvalue $\lambda_n(|C_n|^{-1} Y_nT_n[f])$ is an outlier of large magnitude and, therefore, we do not plot it in order to make the figures more readable.
		
		The clustering of the spectrum around $\pm1$ numerically confirms the distribution result on the preconditioned matrix sequence $\{|C_n|^{-1} Y_nT_n[f]\}_n$ in a more general hypothesis of Theorem \ref{thm:main_hon-bis-corollary2}.
		\begin{figure}
			\centering
			\includegraphics[width=0.45\textwidth]{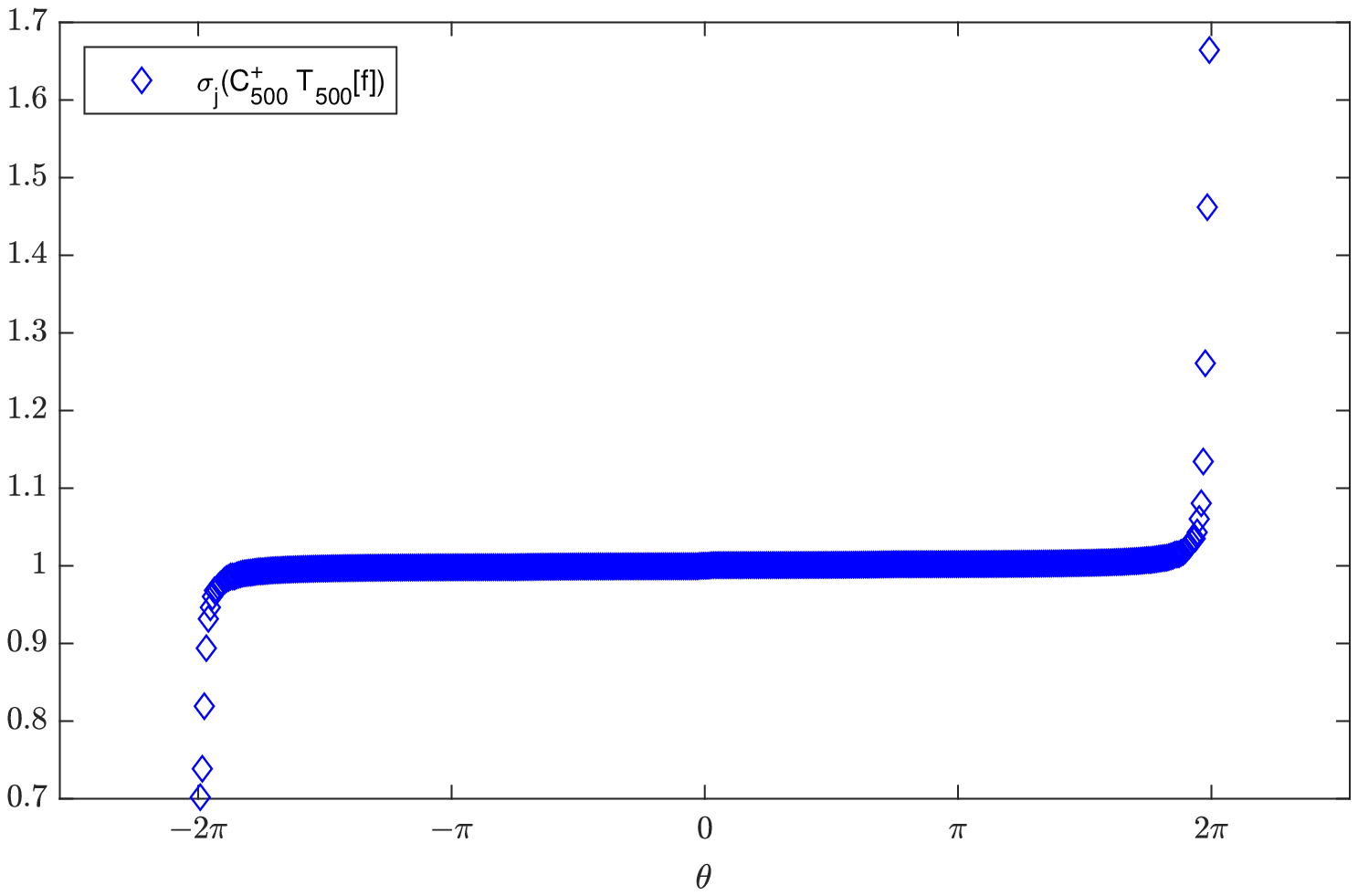}
			\includegraphics[width=0.45\textwidth]{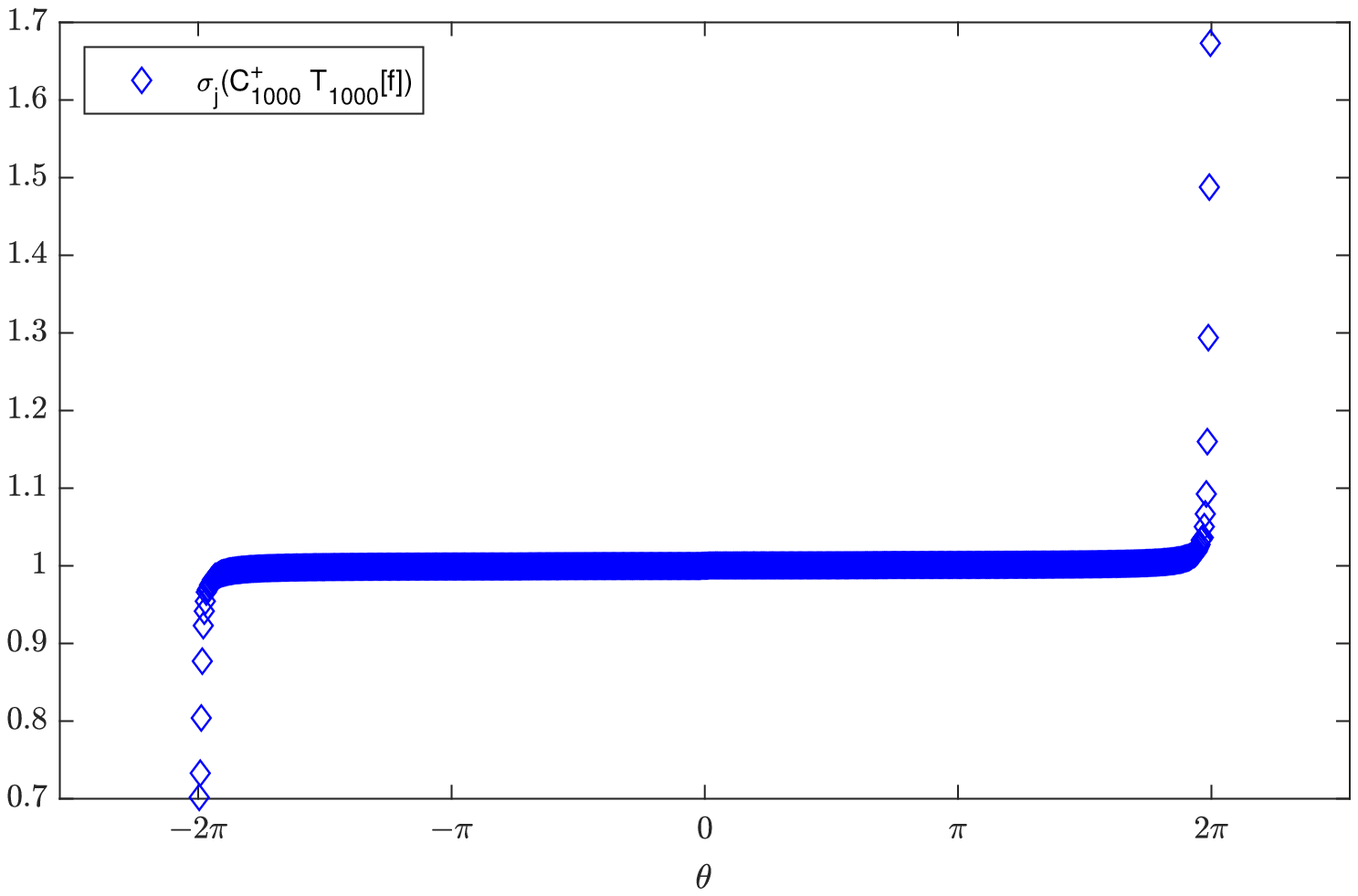}
			\caption{Example~\ref{ex_prec_pwc}, the singular values of $C_n^{+} T_n[f]$, where $f$ is piecewise constant, $C_n$ is the Strang preconditioner, and $n = 500, 1000$.}
			\label{fig:ex_toeplitz}
		\end{figure}
		\begin{figure}
			\centering
			\includegraphics[width=0.45\textwidth]{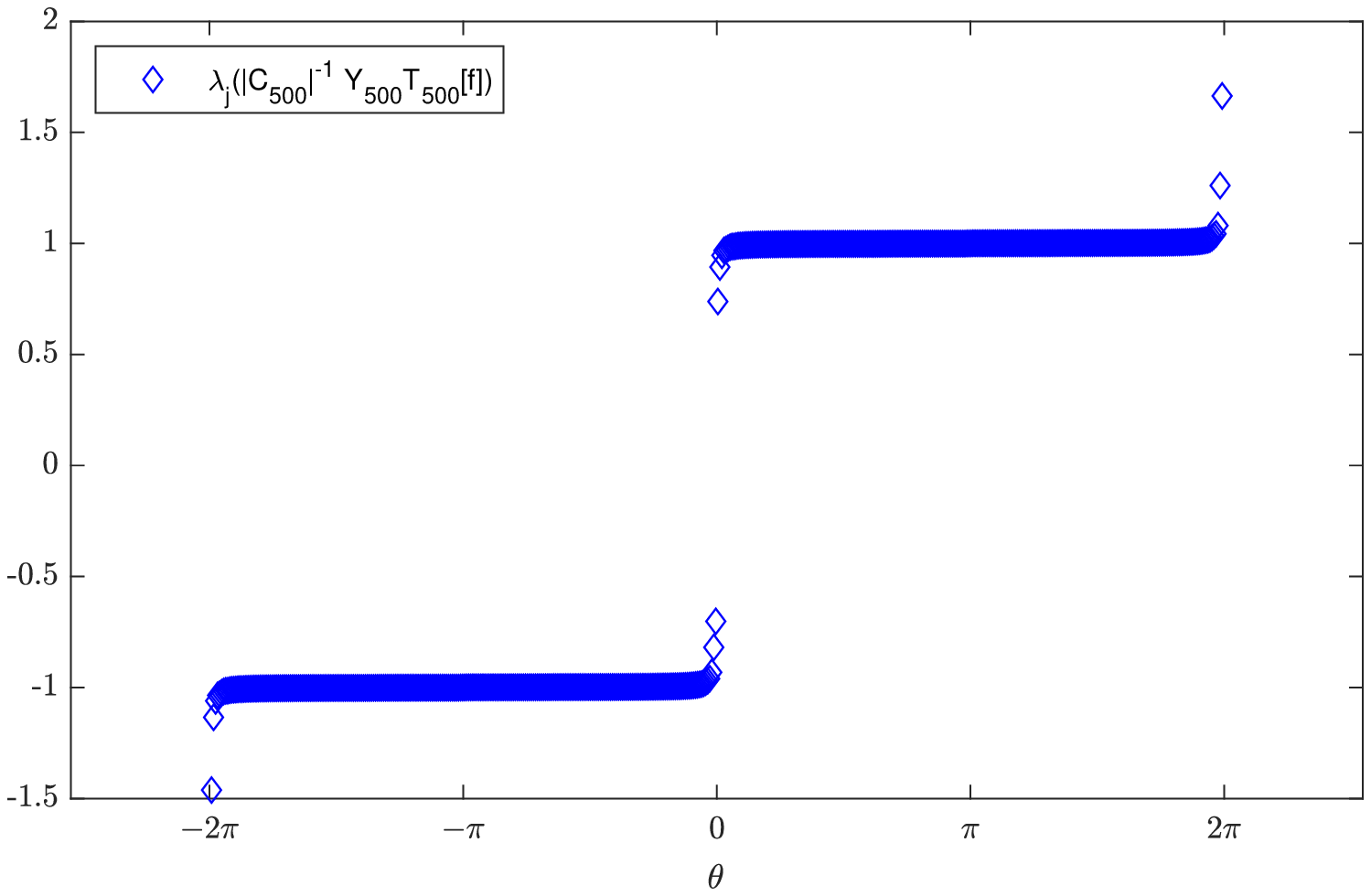}
			\includegraphics[width=0.45\textwidth]{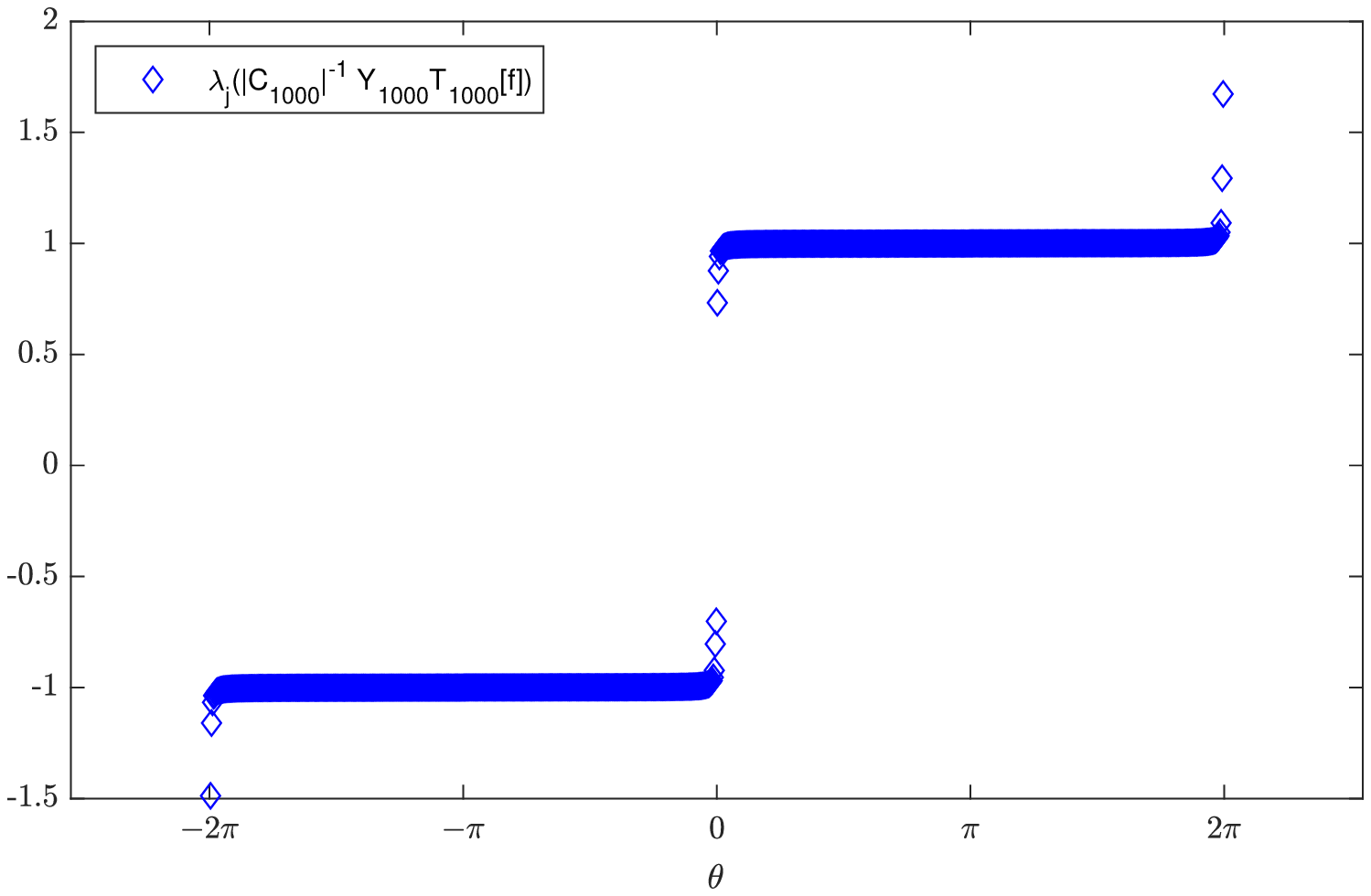}
			\caption{Example~\ref{ex_prec_pwc}, the eigenvalues of $|C_n|^{-1} Y_nT_n[f]$, where $f$ is piecewise constant, $C_n$ is the Strang preconditioner, and $n = 500, 1000$. The greatest eigenvalue $\lambda_n(|C_n|^{-1} Y_nT_n[f])$ is not plotted.}
			\label{fig:ex_prec_pwc}
		\end{figure}
	\end{example}
	
	\section{Conclusions}
	
	We have provided our main theorem that describes the singular and spectral distribution of certain special $2$-by-$2$ block matrix sequences. Included as a special case of the theorem, the symmetric matrix sequence $\{Y_nT_n[f]\}_n$ is essentially distributed as $\pm |f|$. As a consequence, the preconditioned matrix sequence $\{|C_n|^{-1} Y_nT_n[f]\}_n$ is distributed as $\pm1$ provided that a suitable circulant preconditioner $C_n$ is used. A series of numerical examples concerning different generating functions and circulant preconditioners have also been provided to support our theoretical results.
We acknowledge that similar results are given in \cite{mazza-pestana} by using different techniques: while our approach is based on the notion of approximating class of sequences, the derivations in \cite{mazza-pestana}  are obtained by using the powerful *-algebra structure of the GLT sequences.

%\bibliographystyle{siamplain}
%\bibliography{references}

	\section*{References}
	
	\bibliographystyle{siamplain}

\end{document}